\numberwithin{equation}{section}
\numberwithin{equation}{section}
\newcommand\ov{\overline}
\renewcommand\r{\rangle}
\renewcommand\l{\langle}
\newcommand\dsize{\displaystyle}
\newcommand\cal{\mathcal}
\newcommand\R{\mathbb{R}}
\newcommand \C{\mathbb{C}}
\newcommand\Q{\mathbb{Q}}
\newcommand\Z{\mathbb{Z}}
\newcommand\N{\mathbb{N}}
\renewcommand\S{\mathbb{S}}
 \newcommand\A{{\bf A}}
 \newcommand\M{{  M}}
\newcommand\B{\mathcal{B}}
\newcommand\s{\mathcal{S}}
\newtheorem{Thm}{Theorem}[section]
\newtheorem{Lemma}[Thm]{Lemma}
\newtheorem{Cor}[Thm]{Corollary}
\theoremstyle{remark}
\begin{document}

\title[]{Exponential bases on  multi-rectangles of $\R^d$}
\author{Laura De Carli}
\address{L.~De Carli, Florida International University,
Department of Mathematics,
Miami, FL 33199, USA}
\email{decarlil@fiu.edu}
\subjclass[2010]{
  42C15,  42C30.}
 \begin{abstract}
 
 We produce explicit exponential bases on finite    union of disjoint   rectangles in $\R^d$ with   rational vertices.   
 The proof of our main result relies on the semigroup properties and   precise   norm  estimates of a remarkable   family of  linear operators on $\ell^2(\Z^d)$ that generalize  the discrete Hilbert transform. 
\end{abstract} 
 
\maketitle

\section{Introduction}

The main purpose of this paper is to   construct explicit Riesz bases made of    exponential functions  (or: {\it exponential bases}) on  multi-rectangles  in $\R^d$ with rational vertices. By {\it multi-rectangle} we mean a finite    union of disjoint  rectangles in the form of   $\prod_{j=1}^d [a_j,\, b_j)$, with $-\infty <a_j<b_j <\infty$.  In  this paper we assume
$a_j$, $b_j\in\Q$.  We may denote with  {\it multi-interval}    a finite    union of disjoint  intervals in $\R$, i.e., a multi- rectangle in dimension $d=1$. We have stated  the definition of Riesz basis and other preliminary results in Section 2. 

Exponential bases are known to exist on any multi-interval of $\R$  and on any  multi-rectangle in $\R^d$. 
 See the recent \cite{KN} and  \cite{KN2}, and see also 
 \cite{Marzo} and \cite{DK}.   In these papers no explicit exponential bases are produced.

Since we consider only multi-rectangles with vertices in $\Q^d$, 
after perhaps  a  translation  and dilation of coordinates  (see Corollary \ref{Cor-rational-cubes}) we   can restrict our attention to   multi-rectangles    with   vertices in   $(\frac 12+\Z)^d$,  the lattice of points  in $\R^d$ with half-integer coordinates. 
Such multi-rectangles  can be written as  disjoint union  of translated of the unit cube   $Q_0= [-\frac 12, \ \frac 12)^d$. 
 We   let 
\begin{equation}\label{defQ}
Q= Q(\vec M_1, \, ...,\,\vec M_N)=\bigcup_{p=1}^N   Q_0+\vec M_p, 
\end{equation} 
where we assume $  \vec M_p\in\Z^d$ and $\vec M_p\ne  \vec M_q \mbox{ if }  p\ne q$.
 We  find exponential bases of $L^2(Q)$  in the form of 
\begin{equation}\label{d2-defofB}
  \B= \B(\vec\delta_1, \, ...,\, \vec \delta_N)= \bigcup_{j=1}^{N } \{e^{ 2\pi i \l \vec n+\vec \delta_j,\,  x\r}\}_{\vec n\in\Z^d} \end{equation}
  where  $\vec \delta_p=(\delta_{p,1}, ...,\, \delta_{p,d}) \in\R^d$.  Each set $\{e^{ 2\pi i \l \vec n+\vec \delta_j,\,  x\r}\}_{\vec n\in\Z^d}$ is an ortho-normal exponential basis on any  unit cube of    $Q$. 
  The idea of combining exponential bases on individual rectangles to form a basis on  their disjoint union is   not new. See  e.g. the introduction of \cite{KN}, and \cite{KN2}  and \cite[Sect. 4]{LS}.  Our main result is the following
 
  \begin{Thm}\label{T1-N-interv}
   $\B$   is a Riesz basis of  $L^2(Q)$  if and only if  the matrix
    \begin{equation}\label{defGamma} {\bf \Gamma}= {\bf \Gamma}(\l\vec M_p,\, \vec\delta_j\r)=\{ e^{2\pi i \l\vec \delta_j, \vec M_p\r}\}_{1\leq j,p\leq N} \end{equation} is nonsingular.  
   %
   The optimal frame constants   of $\B$  are  the maximum and minimum singular value of ${\bf \Gamma}$.
    
   \end{Thm}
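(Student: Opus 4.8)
The plan is to reduce the Riesz-basis question for $\B$ on $L^2(Q)$ to a pointwise linear-algebra estimate for the matrix ${\bf\Gamma}$, by "unfolding" $Q$ into $N$ copies of the unit cube $Q_0$. The starting observation is the orthonormality that drives everything: for each fixed $j$ the system $\{e^{2\pi i\l\vec n+\vec\delta_j,\, y\r}\}_{\vec n\in\Z^d}$ is an orthonormal basis of $L^2(Q_0)$, since the inner product over $Q_0$ of two of its members is $\int_{Q_0}e^{2\pi i\l\vec n-\vec n',\,y\r}\,dy=\delta_{\vec n,\vec n'}$ (the phase $\vec\delta_j$ cancels). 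Consequently the map $f\mapsto\mathbf g=(g_1,\dots,g_N)$, $g_p(y)=f(y+\vec M_p)$ for $y\in Q_0$, is a unitary identification of $L^2(Q)$ with $(L^2(Q_0))^N$, and $\|f\|_{L^2(Q)}^2=\int_{Q_0}\|\mathbf g(y)\|_{\C^N}^2\,dy$.

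Next — the heart of the matter — I would compute the synthesis operator of $\B$ in these coordinates. Given finitely supported coefficients $c=\{c_{j,\vec n}\}$, put $\phi_j=\sum_{\vec n}c_{j,\vec n}e^{2\pi i\l\vec n+\vec\delta_j,\cdot\r}\in L^2(Q_0)$, so that $\|\phi_j\|_{L^2(Q_0)}^2=\sum_{\vec n}|c_{j,\vec n}|^2$ by the orthonormality above. Writing $F=\sum_{j,\vec n}c_{j,\vec n}e^{2\pi i\l\vec n+\vec\delta_j,\cdot\r}$ and using $e^{2\pi i\l\vec n,\vec M_p\r}=1$ (here the integrality $\vec M_p,\vec n\in\Z^d$ is essential) I obtain, for $y\in Q_0$,
\[
F(y+\vec M_p)=\sum_{j=1}^N e^{2\pi i\l\vec\delta_j,\vec M_p\r}\phi_j(y)=\sum_{j=1}^N{\bf\Gamma}_{j,p}\,\phi_j(y)=\big({\bf\Gamma}^T\vec\phi(y)\big)_p,
\]
where $\vec\phi=(\phi_1,\dots,\phi_N)^T$. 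Thus the cube data of $F$ is $\mathbf g(y)={\bf\Gamma}^T\vec\phi(y)$ pointwise, whence
\[
\Big\|\sum_{j,\vec n}c_{j,\vec n}e^{2\pi i\l\vec n+\vec\delta_j,\cdot\r}\Big\|_{L^2(Q)}^2=\int_{Q_0}\|{\bf\Gamma}^T\vec\phi(y)\|_{\C^N}^2\,dy,\qquad \sum_{j,\vec n}|c_{j,\vec n}|^2=\int_{Q_0}\|\vec\phi(y)\|_{\C^N}^2\,dy.
\]
The key collapse is that the per-family sum telescopes into the single phase ${\bf\Gamma}_{j,p}$, which is exactly where ${\bf\Gamma}$ enters; I expect this identity, together with the role of the integrality of the $\vec M_p$, to be the main point to get right, after which only linear algebra remains.

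Applying the pointwise singular-value bounds $\sigma_{\min}\|v\|\le\|{\bf\Gamma}^T v\|\le\sigma_{\max}\|v\|$ for $v\in\C^N$ (note that ${\bf\Gamma}^T$ and ${\bf\Gamma}$ have the same singular values $\sigma_{\min}\le\cdots\le\sigma_{\max}$) and integrating, I get
\[
\sigma_{\min}\Big(\sum_{j,\vec n}|c_{j,\vec n}|^2\Big)^{1/2}\le\Big\|\sum_{j,\vec n}c_{j,\vec n}e^{2\pi i\l\vec n+\vec\delta_j,\cdot\r}\Big\|_{L^2(Q)}\le\sigma_{\max}\Big(\sum_{j,\vec n}|c_{j,\vec n}|^2\Big)^{1/2}.
\]
If ${\bf\Gamma}$ is nonsingular then $\sigma_{\min}>0$, so the synthesis operator $T\colon\ell^2\to L^2(Q)$ sending the standard orthonormal basis to $\B$ is bounded, bounded below, and — since any $f$ gives $\mathbf g$ and $\vec\phi=({\bf\Gamma}^T)^{-1}\mathbf g$ recovers admissible coefficients — onto. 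Hence $T$ is a bounded invertible operator and $\B$, being the image of an orthonormal basis under such an operator, is a Riesz basis. Choosing $\vec\phi$ constant and equal to an extremal singular vector of ${\bf\Gamma}^T$ shows both inequalities are sharp, so $\sigma_{\min}$ and $\sigma_{\max}$ are the optimal Riesz constants of $\B$, which by the normalization recalled in Section 2 are its optimal frame constants.

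For the converse I would argue the contrapositive. If ${\bf\Gamma}$ is singular, pick $0\ne v\in\ker{\bf\Gamma}^T$ and set $\phi_j=v_j\,\mathbf 1_{Q_0}$; the associated coefficients $c_{j,\vec n}=\l\phi_j,e^{2\pi i\l\vec n+\vec\delta_j,\cdot\r}\r$ are not all zero, yet $\mathbf g(y)={\bf\Gamma}^T\vec\phi(y)\equiv 0$, so $F\equiv 0$. Thus $\B$ admits a nontrivial $\ell^2$ dependence (equivalently, the lower bound fails), so it is not minimal and cannot be a Riesz basis. This completes the equivalence, and the singular-value identification of the constants is built into the two-sided bound above.
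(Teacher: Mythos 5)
Your argument is correct, but it follows a genuinely different route from the paper's. The paper proves the two halves of the Riesz-basis property separately: the frame inequality \eqref{e2-cond-span} is obtained by expanding $\l f, e^{2\pi i\l\vec n+\vec\delta_j,\,x\r}\r_{L^2(Q)}$ cube by cube and applying Parseval on $Q_0$, which produces the Hermitian form of ${\bf B}={\bf\Gamma}^*{\bf\Gamma}$ (see \eqref{def-beta}); the Riesz-sequence inequality \eqref{e2-cond-lin-ind} is handled by computing the cross inner products $\l S_i,\,S_j\r_{L^2(Q_0+\vec M_p)}$ of exponential sums with \emph{different} phases via Lemma \ref{L2-inner-prod-Td}, which rests on the isometry group $T_{\vec t}$ of Section \ref{S-isometry}; this yields the form of $\A={\bf\Gamma}{\bf\Gamma}^*$ (see \eqref{def-alpha}), and Lemma \ref{L2-eigen-TT*} reconciles the two sets of constants. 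You get both halves at once: by unfolding $L^2(Q)$ unitarily onto $(L^2(Q_0))^N$ and using $e^{2\pi i\l\vec n,\,\vec M_p\r}=1$, you exhibit the synthesis operator as pointwise multiplication by ${\bf\Gamma}^T$ on the fibers, and this single identity delivers the two-sided Riesz bounds, the surjectivity of the synthesis operator (hence the basis property, as the image of an orthonormal basis under a bounded invertible map), the sharpness of the constants (constant extremal singular vectors), and the converse (a kernel vector of ${\bf\Gamma}^T$ gives a nontrivial $\ell^2$-dependence) --- all without the $T_{\vec t}$ machinery and without a separate frame computation. What your fiberization buys is economy and self-containedness; what the paper's route buys is the explicit matrices ${\bf B}$ and $\A$, which are reused in Theorem \ref{T1-special-delta} and in Section \ref{S-est-eigen}, and a showcase for the operators $T_{\vec t}$, which the paper advertises as one of its points of interest. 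Two details to align with the paper: its ``singular values'' are by definition the eigenvalues of ${\bf\Gamma}^*{\bf\Gamma}$, i.e.\ the squares of your $\sigma$'s, so it is your squared bounds that match the statement of the theorem; and your extremal and kernel test functions $\phi_j=v_j\mathbf{1}_{Q_0}$ have infinitely many nonzero coefficients, so you should remark that the identity $\|F\|_{L^2(Q)}^2=\int_{Q_0}\|{\bf\Gamma}^T\vec\phi(y)\|_{\C^N}^2\,dy$ extends from finite sums to all of $\ell^2$ by continuity --- routine, given the bounds you already proved for finite sums.
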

 \medskip
 We recall that the singular values of a matrix ${  M}$ are the   eigenvalues of ${  M^*  M} $, where ${ M^*}$ is the   conjugate   transpose of ${  M}$.
 
The  optimal frame constants of $\B$ can  be explicitly evaluated  when  $N=2$ (see Corollary \ref{C4-2 cubes}); the case  $N>2$  is discussed in    Section \ref{S-est-eigen}.

 Exponential bases  or frames on multi-intervals of the real line   have been investigated by several authors. In addition to \cite{KN}, see  also  \cite{BK},  and \cite[Sect. 4]{LS} and the references cited in these papers. 
The case $d=1$ of   Theorem \ref{T1-N-interv} extends  the   main theorem   and Remark 4  in \cite{BK}.
  

\medskip
In general, it is not true  that an exponential frame  contains a Riesz basis  or that an exponential Riesz sequences can be completed to a   Riesz basis (see e.g. \cite{S2}). But when $Q=Q(\vec M_1,\, ...,\, \vec M_N)$ is as in \eqref{defQ},   sets  of exponentials  $\B=\B(\vec\delta_1, ...,\vec\delta_N)$ as in   \eqref{d2-defofB}  enjoy the following remarkable properties.

\begin{Thm} \label{Cor-equiv-bases-frames} 
   The following  are equivalent:
 \begin{itemize}
 \item $\B$ is a Riesz sequence   in   $L^2(Q)$
 \item $\B$ is a frame 
 \item $\B$ is a Riesz basis.
 \end{itemize}
 \end{Thm}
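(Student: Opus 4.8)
The plan is to reduce all three notions to a single algebraic condition on the matrix ${\bf \Gamma}$ and then invoke Theorem \ref{T1-N-interv}. Two of the implications come for free: by general frame theory a Riesz basis is simultaneously a frame and a Riesz sequence. Since Theorem \ref{T1-N-interv} identifies ``$\B$ is a Riesz basis'' with ``${\bf \Gamma}$ is nonsingular,'' it therefore suffices to show that each of the hypotheses ``$\B$ is a frame'' and ``$\B$ is a Riesz sequence'' already forces $\det{\bf \Gamma}\neq 0$; all three bulleted properties will then be seen to be equivalent to nonsingularity of ${\bf \Gamma}$.

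To carry this out I would use the isometry $L^2(Q)\cong L^2(Q_0)^N$ given by $f\mapsto \vec f=(f_1,\dots,f_N)$ with $f_p(y)=f(y+\vec M_p)$, together with the fact that for each fixed $j$ the family $\{\phi_{j,\vec n}\}_{\vec n\in\Z^d}$, where $\phi_{j,\vec n}(y)=e^{2\pi i\l \vec n+\vec\delta_j,\,y\r}$, is an orthonormal basis of $L^2(Q_0)$. Writing $e_{j,\vec n}$ for the elements of $\B$ and using $e^{2\pi i\l \vec n,\vec M_p\r}=1$, the computation underlying Theorem \ref{T1-N-interv} gives $\l f,\,e_{j,\vec n}\r=\l (\ov{{\bf \Gamma}}\,\vec f)_j,\,\phi_{j,\vec n}\r$, where $\ov{{\bf \Gamma}}$ is the entrywise conjugate of ${\bf \Gamma}$ from \eqref{defGamma} acting on the vector $\vec f$. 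Read through the two evident unitaries (the isometry above and the orthonormal-basis expansions), this says that the analysis operator $T$ of $\B$ is unitarily equivalent to $\ov{{\bf \Gamma}}\otimes I$ on $L^2(Q_0)^N$. Because ${\bf \Gamma}$ is a fixed finite matrix, $\B$ is automatically Bessel, so only the lower bounds are at issue.

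From here everything is finite-dimensional linear algebra tensored with the identity of $L^2(Q_0)$. The frame operator $T^*T$ is unitarily equivalent to $((\ov{{\bf \Gamma}})^{*}\ov{{\bf \Gamma}})\otimes I$, so $\B$ is a frame iff $(\ov{{\bf \Gamma}})^{*}\ov{{\bf \Gamma}}$ is positive definite, i.e.\ iff $\ov{{\bf \Gamma}}$ is injective; the Gram operator $TT^*$ is unitarily equivalent to $(\ov{{\bf \Gamma}}\,(\ov{{\bf \Gamma}})^{*})\otimes I$, so $\B$ is a Riesz sequence iff $\ov{{\bf \Gamma}}\,(\ov{{\bf \Gamma}})^{*}$ is positive definite, i.e.\ iff $\ov{{\bf \Gamma}}$ is surjective. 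Since $\ov{{\bf \Gamma}}$ is a square $N\times N$ matrix, injectivity, surjectivity and invertibility coincide, and they hold for $\ov{{\bf \Gamma}}$ exactly when they hold for ${\bf \Gamma}$; all three reduce to $\det{\bf \Gamma}\neq0$, which by Theorem \ref{T1-N-interv} is the Riesz-basis condition. The one step needing care — and the only real obstacle — is making the unitary equivalence $T\cong\ov{{\bf \Gamma}}\otimes I$ rigorous: one must verify that $f\leftrightarrow\vec f$ and each coefficient-to-function map $(c_{j,\vec n})_{\vec n}\mapsto\sum_{\vec n}c_{j,\vec n}\phi_{j,\vec n}$ are isometric and that the per-$j$ contributions assemble into the single matrix $\ov{{\bf \Gamma}}$, with the functions in each slot ranging freely and independently over $L^2(Q_0)$. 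Once that bookkeeping is settled, the collapse of the three notions is forced by the elementary fact that a square matrix is injective iff surjective iff invertible; conceptually it is the critical density — exactly $N$ shifted copies of $\Z^d$ spread over the $N$ unit cubes of $Q$ — that makes ${\bf \Gamma}$ square and thereby removes the separation between frame, Riesz sequence, and Riesz basis recalled before the statement.
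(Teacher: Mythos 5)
Your proposal is correct, and it reaches the equivalence by a route that genuinely differs from the paper's in the half where the paper's machinery is concentrated. The paper obtains Theorem \ref{Cor-equiv-bases-frames} as a by-product of its proof of Theorem \ref{T1-N-interv}: the frame inequality \eqref{e2-cond-span} is reduced, via \eqref{e4-eq33}--\eqref{e3-positive-sum}, to positive definiteness of ${\bf B}={\bf \Gamma}^*{\bf \Gamma}$, while the Riesz-sequence inequality \eqref{e2-cond-lin-ind} is treated by a \emph{separate} computation (Lemma \ref{L2-inner-prod-Td}) that expresses $\Vert S_1+\dots+S_N\Vert^2_{L^2(Q)}$ through the isometries $T_{\vec t}$ of Section \ref{S-isometry}, producing the matrix $\A={\bf \Gamma}{\bf \Gamma}^*$; the two sides are then matched by Lemma \ref{L2-eigen-TT*}, and both conditions collapse to $\det{\bf \Gamma}\ne 0$. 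Your frame half is the same computation as the paper's (your identity $\l f, e_{j,\vec n}\r=\l(\ov{{\bf \Gamma}}\,\vec f)_j,\ \phi_{j,\vec n}\r$ is exactly \eqref{e4-eq33} summed over $p$), but your Riesz-sequence half is different: rather than computing the cross inner products $\l S_i, S_j\r_{L^2(Q)}$ with the discrete-Hilbert-transform operators, you observe that the analysis operator is unitarily equivalent to $\ov{{\bf \Gamma}}\otimes I$, so the synthesis operator is its adjoint, and then frame $\iff$ injectivity of $\ov{{\bf \Gamma}}$ (via $T^*T$), Riesz sequence $\iff$ surjectivity of $\ov{{\bf \Gamma}}$ (via $TT^*$), which coincide because $\ov{{\bf \Gamma}}$ is square; this single linear-algebra fact replaces both the $T_{\vec t}$ machinery and Lemma \ref{L2-eigen-TT*}, and it even recovers the optimal frame constants, since the eigenvalues of $(\ov{{\bf \Gamma}})^*\ov{{\bf \Gamma}}=\ov{{\bf \Gamma}^*{\bf \Gamma}}$ and of $\ov{{\bf \Gamma}}(\ov{{\bf \Gamma}})^*$ agree with those of ${\bf B}$ and $\A$. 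What your route buys is economy and transparency: the equivalence of the three notions is visibly the statement that a square matrix is injective iff surjective iff invertible, exactly the ``critical density'' heuristic you mention. What the paper's route buys is the explicit evaluation of $\Vert\sum_j S_j\Vert^2$ in terms of the isometries $T_{\vec\delta_j}$, which is of independent interest and is one of the stated purposes of the paper. The only point you must still make rigorous is the one you flag yourself: that the two identifications ($f\mapsto\vec f$ and the coordinate maps relative to the orthonormal bases $\{\phi_{j,\vec n}\}_{\vec n}$) are unitary, and that the adjoint of the analysis operator is the synthesis operator, which is legitimate because the Bessel bound $\Vert{\bf \Gamma}\Vert_{op}<\infty$ makes all operators involved bounded, so the finite-sequence formulation of \eqref{e2- Riesz-sequence} extends by density.
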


\begin{Thm}\label{C-meas=0}   The set 
$$
 {\cal L}= \{ (\vec\delta_1, ...,\vec\delta_N)\in (\R^d)^N\ :\  \B(\vec\delta_1, ...,\vec\delta_N) \ \mbox{   is not a Riesz basis of $L^2(Q)$}\} 
 $$ has Lebesgue measure zero. 
\end{Thm}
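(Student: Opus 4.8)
The plan is to identify $\cal L$ with the zero set of a single trigonometric polynomial and then show that this zero set is null. By Theorem~\ref{T1-N-interv}, the family $\B(\vec\delta_1,\dots,\vec\delta_N)$ fails to be a Riesz basis of $L^2(Q)$ exactly when ${\bf\Gamma}$ is singular, so, writing $\vec\Delta=(\vec\delta_1,\dots,\vec\delta_N)$,
\[
{\cal L}=\bigl\{\vec\Delta\in(\R^d)^N:\ \det{\bf\Gamma}(\vec\Delta)=0\bigr\}.
\]
The function $\vec\Delta\mapsto\det{\bf\Gamma}(\vec\Delta)$ is a finite sum of exponentials, hence the restriction to $\R^{Nd}$ of an entire function on $\C^{Nd}$; since each $\vec M_p\in\Z^d$ it is also $1$-periodic in every one of the $Nd$ real variables, so it may be viewed as a trigonometric polynomial on the torus $\mathbb T^{Nd}$. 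Everything will reduce to two facts: (i) $\det{\bf\Gamma}\not\equiv0$, and (ii) the zero set of a nonzero real-analytic function on $\R^{Nd}$ has Lebesgue measure zero.

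For (i) I would expand the determinant over permutations,
\[
\det{\bf\Gamma}=\sum_{\sigma\in S_N}\operatorname{sgn}(\sigma)\prod_{j=1}^N e^{2\pi i\l\vec\delta_j,\,\vec M_{\sigma(j)}\r}=\sum_{\sigma\in S_N}\operatorname{sgn}(\sigma)\,e^{2\pi i\l\vec\Delta,\,\vec L_\sigma\r},
\]
where $\vec L_\sigma=(\vec M_{\sigma(1)},\dots,\vec M_{\sigma(N)})\in(\Z^d)^N$ is the frequency vector of the $\sigma$-th term. The crucial observation is that $\sigma\mapsto\vec L_\sigma$ is injective: if $\vec L_\sigma=\vec L_\tau$ then $\vec M_{\sigma(j)}=\vec M_{\tau(j)}$ for all $j$, and since the centers $\vec M_p$ are pairwise distinct by hypothesis this forces $\sigma=\tau$. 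Hence the $N!$ summands are characters of $\mathbb T^{Nd}$ attached to pairwise distinct integer frequencies; being orthonormal, they cannot cancel, so $\det{\bf\Gamma}$ has exactly $N!$ nonzero Fourier coefficients, each of modulus $1$. In particular $\det{\bf\Gamma}\not\equiv0$, and this costs essentially no computation---the distinctness of the $\vec M_p$ does all the work.

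For (ii) I would simply invoke the standard fact that a real-analytic function on a connected open set which is not identically zero has a zero set of measure zero. If a self-contained argument is preferred, I would run an induction on the number of variables: separating the last variable $t$, write $\det{\bf\Gamma}=\sum_m g_m(\vec\Delta')\,e^{2\pi i m t}$ with the $g_m$ trigonometric polynomials in the remaining variables $\vec\Delta'$; for each $\vec\Delta'$ with some $g_m(\vec\Delta')\ne0$ the slice in $t$ is a nonzero one-variable trigonometric polynomial and so vanishes only finitely often, while the exceptional $\vec\Delta'$ lie in $\{g_{m_0}=0\}$ for a fixed $g_{m_0}\not\equiv0$ and form a null set by the inductive hypothesis; Fubini's theorem then gives that the full zero set is null. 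Combining (i) and (ii), $\cal L$ has Lebesgue measure zero. I do not anticipate a genuine obstacle: the only step requiring an idea is (i), and there the injectivity of $\sigma\mapsto\vec L_\sigma$ settles it immediately.
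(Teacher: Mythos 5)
Your proof is correct, and it takes a genuinely different route from the paper's. The paper also begins by identifying ${\cal L}$, via Theorem \ref{T1-N-interv}, with the zero set of $\det{\bf \Gamma}$, but it then extends $(\vec\delta_1,\dots,\vec\delta_N)\mapsto\det{\bf \Gamma}$ to a holomorphic function $\psi$ on $(\C^d)^N$ and invokes \cite[Corollary 10]{GR} to conclude that the zero set is Lebesgue null. Your argument never leaves $\R^{Nd}$: since $\vec M_p\in\Z^d$, you view $\det{\bf \Gamma}$ as a trigonometric polynomial on the torus $\mathbb{T}^{Nd}$, and you make explicit the one point the paper leaves implicit, namely that $\det{\bf \Gamma}\not\equiv 0$; your observation that $\sigma\mapsto \vec L_\sigma$ is injective (because the $\vec M_p$ are pairwise distinct), combined with the linear independence of distinct characters, settles this cleanly. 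This non-vanishing step is genuinely necessary in either approach, so your treatment supplies a detail the paper glosses over; your slicing/Fubini induction likewise replaces the citation to several-complex-variables theory with an elementary self-contained argument, and it sidesteps a subtlety in the paper's formulation, namely that $Z\subset\R^{Nd}$ is automatically null as a subset of $(\C^d)^N$, so what is really needed is the real-analytic (restriction) statement rather than the complex one. What the paper's complexification buys in exchange is brevity and generality: holomorphy of $\psi$ does not use $\vec M_p\in\Z^d$, so that argument would apply verbatim to non-integer translates, whereas your torus/Fourier-coefficient framing relies on the integrality of the frequencies (though it could be adapted to distinct real frequencies). One harmless slip: a nonzero one-variable trigonometric polynomial has finitely many zeros per period, not finitely many on all of $\R$; its zero set is of course still null, so your induction goes through unchanged.
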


\medskip
 The following is   a special and significant case of Theorem \ref{T1-N-interv}.

 \begin{Thm}\label{T1-special-delta}
  Let $\vec\delta\in \R^d$ be fixed; the set  \begin{equation}\label{d2-def-S}
 \s(\vec \delta)=  \cup_{j=1}^{N } \{e^{ 2\pi i  \l \vec n +(j-1)\vec \delta, \vec x\r}\}_{\vec n\in\Z^d}.  
  \end{equation} 
 is a  Riesz basis of $L^2(Q)$ if and only if
 \begin{equation}\label{e2-cond=delta} \l \vec M_p - \vec M_q,\ \vec \delta\r\not\in\Z,
 \quad   \mbox{ for every $ 1\leq p\ne q \leq N$}.
 \end{equation} 
The  frame constants  of $\s(\vec \delta)$ are the maximum and  minimum eigenvalue of the matrix ${\bf\tilde  B}=\{\tilde \beta_{p,q}\}_{1\leq p,q\leq N}$, where 
\begin{equation}\label{def-betatilde-Thm13}
\tilde \beta_{p,q}=\begin{cases}  \frac{\sin(\pi N \l \vec\delta ,\, \vec M_q-\vec M_p\r)}{
\sin(\pi   \l \vec\delta ,\, \vec M_q-\vec M_p\r)} & \mbox{ if $p\ne q$},
\cr
N & \mbox{if $p=q$}
\end{cases}
\end{equation}
 \end{Thm}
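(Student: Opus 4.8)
The plan is to read Theorem \ref{T1-special-delta} off Theorem \ref{T1-N-interv} by specialization. Comparing \eqref{d2-def-S} with \eqref{d2-defofB}, the family $\s(\vec\delta)$ is precisely $\B(\vec\delta_1,\dots,\vec\delta_N)$ for the choice $\vec\delta_j=(j-1)\vec\delta$. Writing $z_p:=e^{2\pi i\langle\vec\delta,\vec M_p\rangle}$, the entries in \eqref{defGamma} become
\[
\Gamma_{j,p}=e^{2\pi i\langle(j-1)\vec\delta,\vec M_p\rangle}=z_p^{\,j-1},
\]
so ${\bf\Gamma}$ is the Vandermonde matrix on the nodes $z_1,\dots,z_N$. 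This single observation drives the whole argument.

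First I would dispatch the equivalence. A Vandermonde matrix is invertible exactly when its nodes are pairwise distinct, and $z_p=z_q$ holds iff $\langle\vec\delta,\vec M_p-\vec M_q\rangle\in\Z$. Hence ${\bf\Gamma}$ is nonsingular iff \eqref{e2-cond=delta} holds for every $p\neq q$, and Theorem \ref{T1-N-interv} then gives at once that $\s(\vec\delta)$ is a Riesz basis of $L^2(Q)$ under exactly this condition.

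For the frame constants, Theorem \ref{T1-N-interv} reduces everything to the singular values of ${\bf\Gamma}$, equivalently to the spectrum of ${\bf\Gamma}^*{\bf\Gamma}$, so I would compute its entries. Because $|z_p|=1$,
\[
({\bf\Gamma}^*{\bf\Gamma})_{p,q}=\sum_{k=0}^{N-1}(\bar z_p z_q)^k,
\]
a finite geometric series in $e^{2\pi i\theta_{p,q}}$ with $\theta_{p,q}=\langle\vec\delta,\vec M_q-\vec M_p\rangle$. The diagonal terms equal $N$; summing the off-diagonal series and factoring out half-angles gives $({\bf\Gamma}^*{\bf\Gamma})_{p,q}=e^{i\pi(N-1)\theta_{p,q}}\,\tilde\beta_{p,q}$, with $\tilde\beta_{p,q}$ exactly as in \eqref{def-betatilde-Thm13}.

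The one genuinely delicate point is the spurious phase $e^{i\pi(N-1)\theta_{p,q}}$, which separates ${\bf\Gamma}^*{\bf\Gamma}$ from the real symmetric matrix ${\bf\tilde B}$. I would remove it by writing $\theta_{p,q}=\phi_q-\phi_p$ with $\phi_p=\langle\vec\delta,\vec M_p\rangle$, so that the phase is the coboundary $\overline{d_p}\,d_q$ for $d_p=e^{i\pi(N-1)\phi_p}$; consequently ${\bf\Gamma}^*{\bf\Gamma}=D^*{\bf\tilde B}D$ with $D=\mathrm{diag}(d_1,\dots,d_N)$ unitary. Unitary conjugacy forces ${\bf\Gamma}^*{\bf\Gamma}$ and ${\bf\tilde B}$ to share the same (positive) spectrum, so the eigenvalues of ${\bf\tilde B}$ are exactly those controlling the singular values of ${\bf\Gamma}$; feeding this coincidence into the frame-constant clause of Theorem \ref{T1-N-interv} identifies the frame constants of $\s(\vec\delta)$ with the extreme eigenvalues of ${\bf\tilde B}$. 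I expect this phase-removal step -- recognizing the Dirichlet-kernel phase as a gauge term that is killed by a diagonal unitary -- to be the only nonroutine ingredient; everything else is a direct specialization of Theorem \ref{T1-N-interv}.
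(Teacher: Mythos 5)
Your proposal is correct and follows essentially the same route as the paper: specialize Theorem \ref{T1-N-interv} with $\vec\delta_j=(j-1)\vec\delta$, recognize ${\bf\Gamma}$ as a Vandermonde matrix to get the nonsingularity criterion, sum the geometric series for the entries of ${\bf\Gamma}^*{\bf\Gamma}$, and strip the phase $e^{i\pi(N-1)\theta_{p,q}}$. Your diagonal-unitary conjugation ${\bf\Gamma}^*{\bf\Gamma}=D^*{\bf\tilde B}D$ is exactly the paper's Hadamard-product identity $\langle {\bf B}\vec v,\vec v\rangle=\langle{\bf\tilde B}(\vec v\odot\vec w),\vec v\odot\vec w\rangle$ expressed in cleaner language, so the two arguments coincide.
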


\medskip
The proof of  Theorem \ref{T1-N-interv}  relies on the semigroup properties and   precise   norm  estimates of a remarkable   family of  linear operators on $\ell^2(\Z^d)$ that we   discuss in section \ref{S-isometry}. 

The rest of the paper is organized as follows. In Section \ref{S-proofs} we prove our main results. We have collected a number of corollaries  and examples in Section  \ref{S-Corollaries}. In Section \ref{S-est-eigen} we estimate the frame constants of $\B$ and $\s(\vec \delta)$.  

 \section{Preliminaries}\label{S-Prelim}
 \subsection{Bases and frames} We have used   \cite{Lang} for standard linear algebra results  and the excellent textbook  \cite{Heil}    for definitions and properties of  bases and frames in Hilbert spaces. See also \cite{Cr} and   \cite{Y}.   
  
 \medskip
 A sequence of vectors ${\cal V}= \{v_j\}_{j\in\N}$  in a separable Hilbert space  $H$ is a {\it frame} if 
there exist  constants $A, \ B>0$    such that for every $w\in H$,
\begin{equation}\label{e2-frame}
 A||w||^2\leq  \sum_{j=1}^\infty |\l  w, v_j\r |^2\leq B ||w||^2.
\end{equation}
Here, $\langle\ ,\ \rangle $  and $||\ ||=\sqrt{\l \ , \    \r} $    are the  inner product   and the norm  in $H$.   The sequence ${\cal V}$  is a   {\it  tight frame } if $A=B$,   it is a {\it Parseval frame} if $A=B=1$, 
and a {\it Riesz sequence}  if  
 the following
inequality  is satisfied  for all finite sequences $ \{a_j\}_{j\in J}\subset\C $.
\begin{equation}\label{e2- Riesz-sequence}
 A  \sum_{j\in J}   |a_j|^2   \leq  \left\Vert \sum_{j\in J}  a_j  v_j \right\Vert^2  \leq B \sum_{j\in J} |a_j|^2 .
\end{equation}
A {\it Riesz basis} is a frame and a Riesz sequence, i.e.,    a set  of vectors  that satisfies  \eqref{e2-frame} and \eqref{e2- Riesz-sequence}.

If $H=L^2(D)$, with  $D\subset \R^d$ of finite Lebesgue measure $|D|$,   Riesz bases (or frames) made   of exponential functions  are especially relevant in the applications.   An {\it exponential  basis} of $L^2(D)$  is a Riesz basis in the form of  $E(\Lambda)=\{ e^{2\pi i \langle   \vec\lambda ,\,  x\rangle}\}_{ \vec\lambda\in\Lambda}$, where $\Lambda$ is a discrete set of $\R^d $.
Exponential bases  are important 
to provide  unique and stable  representation of functions in $L^2(D)$  in terms of the functions in  $E(\Lambda)$, 
 with  coefficients that are easy to calculate. 
Unfortunately,  our understanding of exponential  bases is still very incomplete. 
  There are very few examples of domains  in which it is known how to construct   exponential bases,
 and no example of domain  for which 
 exponential bases are known  not to exist. See   \cite{GL}, \cite{K2}  and  the   references
cited there.  

Because  frames  are not necessarily linearly independent,  
they are often  more easily constructible than bases. 
For example, when $D\subset Q_0=[-\frac 12, \frac 12)^d$  we can easily verify that    $E(\Z^d) $  is a Parseval  frame on $D$.  See also \cite[Prop. 2.1]{DK}.
 The construction of exponential frames on unbounded sets of finite measure is   a difficult problem  that has  been recently solved in  \cite{NOU}. 
 
\subsection{Exponential bases on $L^2(Q_0)$}
It is proved in   \cite{LRW}   that        $E(\Lambda)$ is  an orthonormal basis on $L^2(Q_0)$ 
if and only if  the sets  $\{ Q_0+\vec\lambda \}_{\vec\lambda\in\Lambda} $ {\it tiles  $\R^d$}; that is, if 
  $\cup_{\vec\lambda\in\Lambda  }(Q_0+\vec\lambda)=\R^d $    and $|(Q_0+\vec\lambda_j)\cap (Q_0+\vec\lambda_i)|=0$  whenever $\lambda_i\ne \lambda_j$.

    A bounded domain $D\subset\R^d$ is called {\it spectral} if $L^2(D)$ is  has an orthogonal exponential basis. The connection between tiling and spectral properties of   domains of $\R^d$  is deep and fascinating and has spur intense investigation since when B. Fuglede formulated his famous tiling $\iff$ spectral conjecture in \cite{F}.  See  also \cite{K} and the references cited there.

Non-orthogonal exponential bases in  $L^2(-\frac 12,\frac 12)$ 
were first  investigated  by Paley and Wiener \cite{PW}  and Levinson\cite{Levinson} and extensively studied by several other authors.    A complete characterization of exponential bases on $L^2(-\frac 12,\frac 12)$ was given by 
Pavlov   in \cite{P}.  
 It is proved in \cite[Lemma 2.1]{SZ} that if $\Lambda=\{( \lambda_{n_1}, \, ...,\, \lambda_{n_d})\}_{(n_1, ... n_d)\in\Z^d}$,  the set
  $E(\Lambda)$ is an  exponential basis  on $Q_0=[-\frac 12, \frac 12)^d $ if and only if  the sets $\{e^{2\pi i  \lambda_{n_j} x_j}\}_{n_j\in\Z}$ are 
exponential  bases  on $ [-\frac 12, \frac 12)$. To   the best of our knowledge, no complete characterization of exponential bases on $L^2(Q_0 )$ exists in the literature.

\medskip
\subsection{Stability of Riesz bases.} Riesz bases are {\it stable}, in the sense that a small perturbation of a Riesz basis produces a Riesz basis. The celebrated Kadec stability theorem states that  any  set   $ \{ e^{2\pi i\lambda_n  x}\}_{n\in\Z}$ is Riesz basis of $L^2(-\frac 12,\,\frac 12)$ if    $ |\lambda_n-n|\leq L <\frac 14$ whenever $n\in\Z$.   In \cite{Levinson} it is shown   that the constant $\frac 14$ cannot be replaced by any larger constant. 
See \cite{Ka} or \cite{Y} for a proof of Kadec theorem. 

The following  multi-dimensional generalization of Kadec theorem is  in   \cite{SZ}. \begin{Thm}\label{T2-multi-Kad}
   Let $\Lambda= \{ \lambda_{\vec n}=(\lambda_{\vec n,1},\, ...,\, \lambda_{\vec n,d})\in\R^d\}_{\vec n\in\Z^d}$ be a sequence 
in $\R^d$ for which 
   $ \dsize || \vec\lambda_{\vec n }-  \vec n ||_\infty =\sup_{1\leq j\leq d} 
   |\lambda_{\vec n,j} -n_j|  \leq L<\frac 14$  
 whenever $\vec n\in \Z^d$. Then,  $E(\Lambda)$ is an exponential basis of $L^2(Q_0)$ and the constant $\frac 14$ cannot be replaced by any larger constant.
 \end{Thm}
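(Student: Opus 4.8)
The plan is to recast the statement as a two-sided estimate for the Gram matrix of the system and to separate the tractable product case from the genuinely multi-dimensional one. Writing $e_{\vec\lambda_{\vec n}}(x)=e^{2\pi i\l\vec\lambda_{\vec n},x\r}$, the family $E(\Lambda)$ is a Riesz basis of $L^2(Q_0)$ precisely when it is complete and the Gram operator
\begin{equation*}
G=\bigl[\,\l e_{\vec\lambda_{\vec n}},e_{\vec\lambda_{\vec m}}\r\,\bigr]_{\vec n,\vec m\in\Z^d}=\Bigl[\,\prod_{k=1}^d\tfrac{\sin\pi(\lambda_{\vec n,k}-\lambda_{\vec m,k})}{\pi(\lambda_{\vec n,k}-\lambda_{\vec m,k})}\,\Bigr]_{\vec n,\vec m}
\end{equation*}
is bounded and bounded below on $\ell^2(\Z^d)$. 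I would first check the Riesz–sequence inequality, which is exactly the assertion $0<A\,\mathrm{Id}\le G\le B\,\mathrm{Id}$ for some $A,B$ depending only on $L$ and $d$, and treat completeness separately by a deformation argument, using that $\Lambda$ stays uniformly close to the lattice $\Z^d$; the optimal frame constants are then the extreme points of the spectrum of $G$.

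I would then dispose of the \emph{separable} case, in which $\lambda_{\vec n,k}$ depends only on $n_k$. Here $\Lambda$ is a product set, $G=G_1\otimes\cdots\otimes G_d$ factors as a tensor product of the one–dimensional Gram matrices, and \cite[Lemma 2.1]{SZ} together with the one–dimensional Kadec theorem quoted above shows at once that each factor $G_k$ is bounded and boundedly invertible exactly when $L<\tfrac14$, with frame constants the products of the one–dimensional ones. This case already forces the threshold $\tfrac14$ and yields the sharpness: by Levinson's examples there is a sequence with $|\mu_n-n|\to\tfrac14$ for which $\{e^{2\pi i\mu_n x}\}$ fails to be a basis of $L^2(-\tfrac12,\tfrac12)$, and its separable lift $\vec\lambda_{\vec n}=(\mu_{n_1},n_2,\dots,n_d)$ then fails on $L^2(Q_0)$, so $\tfrac14$ cannot be enlarged.

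The hard part is the \emph{non-separable} case, where $G$ is no longer a tensor product. The tempting route—writing $e_{\vec\lambda_{\vec n}}=e_{\vec n}+(e_{\vec\lambda_{\vec n}}-e_{\vec n})$ and bounding the perturbation as in Kadec's one–dimensional proof—must fail here, and it is instructive to see why. Expanding $e^{2\pi i\l\vec\lambda_{\vec n}-\vec n,x\r}-1=\prod_k e^{2\pi i(\lambda_{\vec n,k}-n_k)x_k}-1$ over the $2^d-1$ nonempty coordinate subsets and applying the one–dimensional estimate to each piece only yields the operator bound $(1+\rho_1(L))^d-1$ for the perturbation, with $\rho_1(L)=1-\cos\pi L+\sin\pi L$; already for $d\ge2$ this exceeds $1$ well before $L$ reaches $\tfrac14$ (indeed the perturbation norm genuinely grows like $(1+\rho_1(L))^d$, even in the separable case), so the naive lower bound $2-(1+\rho_1(L))^d$ is negative and useless. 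Hence the positive frame constant cannot be extracted from the orthonormal basis by a norm perturbation; it must come from the exact structure of $G$.

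My approach to the non-separable lower bound would therefore be to estimate the quadratic form $\l Gc,c\r=\int_{Q_0}\bigl|\sum_{\vec n}c_{\vec n}e_{\vec\lambda_{\vec n}}\bigr|^2\,dx$ directly, adapting Kadec's argument at the level of the quadratic form rather than the operator. Concretely I would split each factor $e^{2\pi i(\lambda_{\vec n,k}-n_k)x_k}-1$ into its even and odd Kadec parts, expand $\sum_{\vec n}c_{\vec n}e_{\vec\lambda_{\vec n}}$ accordingly, and regroup the integer–frequency contributions \emph{with their signs}, exploiting the mutual orthogonality of distinct lattice exponentials so that the cross terms telescope into a controllable correction instead of accumulating in absolute value. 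The essential obstacle, which I expect to be the whole difficulty, is precisely this coordinated cancellation: one must show that the joint $d$–dimensional regrouping preserves a positive lower bound for every $L<\tfrac14$, matching what the tensor product delivers in the separable case, rather than degrading as the crude subset expansion does.
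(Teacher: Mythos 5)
Your proposal is not a complete proof: the step that constitutes the actual content of Theorem \ref{T2-multi-Kad} is left as a plan. What you do carry out is correct and worthwhile --- the separable case does follow from one-dimensional Kadec together with \cite[Lemma 2.1]{SZ}; the lift $\vec\lambda_{\vec n}=(\mu_{n_1},n_2,\dots,n_d)$ of the one-dimensional extremal example (Ingham \cite{I}, Levinson \cite{Levinson}) does give sharpness of $\frac 14$ in every dimension; and your diagnosis of why the naive Paley--Wiener route fails is accurate: expanding $\prod_k e^{2\pi i(\lambda_{\vec n,k}-n_k)x_k}-1$ over coordinate subsets and applying the one-dimensional bound $\rho_1(L)=1-\cos\pi L+\sin\pi L$ to each factor yields only $(1+\rho_1(L))^d-1$, which for $d\ge 2$ exceeds $1$ well before $L$ reaches $\frac 14$, so the admissible range would shrink with $d$. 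But precisely because that route fails, the non-separable lower bound is the whole theorem, and your treatment of it consists of describing what you \emph{would} do (split each factor into Kadec's even/odd parts, regroup with signs) followed by the sentence that one ``must show that the joint $d$-dimensional regrouping preserves a positive lower bound for every $L<\frac 14$.'' That sentence is a restatement of the theorem, not a proof of it; no estimate in your text produces the coordinated cancellation you correctly identify as the essential obstacle. (Completeness is likewise deferred to an unspecified ``deformation argument.'') What is needed is a dimension-free perturbation estimate, essentially of the form $1-\prod_{j=1}^d(\cos\pi L_j-\sin\pi L_j)<1$, which is $<1$ exactly when every $L_j<\frac 14$; this is obtained in \cite{SZ} by expanding each coordinate in Kadec's orthogonal system and exploiting orthogonality across coordinates multiplicatively, and your outline stops exactly where that estimate would have to begin.

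For the comparison you were asked to make: the paper does not prove Theorem \ref{T2-multi-Kad} at all --- it quotes it from \cite{SZ}, where the multivariate estimate just described is carried out. So your reduction of the separable case and your sharpness example go beyond what the paper itself presents, but they do not substitute for the missing core argument, and as it stands your proposal proves strictly less than the cited theorem.
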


\subsection{Three useful lemmas}

\begin{Lemma}\label{L-same-consts}
Let ${\cal V}$ be a Riesz basis in $H$;    
  the optimal  constants $A$ and $B$ in  the inequalities \eqref{e2-frame} and \eqref{e2- Riesz-sequence}  are the same. 
\end{Lemma}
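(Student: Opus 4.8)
The plan is to recast both families of inequalities in terms of a single bounded operator and then to compare the spectra of two associated positive operators. Let $\{e_j\}_{j\in\N}$ denote the standard orthonormal basis of $\ell^2(\N)$ and define the synthesis operator $S\colon \ell^2(\N)\to H$ by $S(\{a_j\})=\sum_j a_j v_j$. Its adjoint is the analysis operator $S^*\colon H\to \ell^2(\N)$, given by $S^* w = \{\langle w, v_j\rangle\}_j$. Since $\mathcal{V}$ is a Riesz basis, $S$ is a bounded bijection of $\ell^2(\N)$ onto $H$, hence invertible with bounded inverse; this is the one place where I use that $\mathcal{V}$ is a \emph{basis} and not merely a Riesz sequence or a frame.

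First I would rewrite the Riesz-sequence inequality \eqref{e2- Riesz-sequence}. Since $\|Sa\|^2=\langle S^*S\,a,\,a\rangle$ for every finite sequence $a=\{a_j\}$, and by density for every $a\in\ell^2(\N)$, the inequality \eqref{e2- Riesz-sequence} is equivalent to the operator inequality $A\,I\le S^*S\le B\,I$ on $\ell^2(\N)$. Because $S^*S$ is bounded, positive and self-adjoint, the optimal (largest) $A$ and optimal (smallest) $B$ are exactly $\inf\operatorname{spec}(S^*S)$ and $\sup\operatorname{spec}(S^*S)=\|S^*S\|$. Likewise, using $\sum_j|\langle w, v_j\rangle|^2=\|S^* w\|^2=\langle SS^* w,\,w\rangle$, the frame inequality \eqref{e2-frame} is equivalent to $A\,I\le SS^*\le B\,I$ on $H$, so the optimal frame constants are $\inf\operatorname{spec}(SS^*)$ and $\sup\operatorname{spec}(SS^*)$. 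Thus the statement reduces to showing that $S^*S$ and $SS^*$ have the same extreme spectral values.

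The key observation is that invertibility of $S$ yields the similarity $SS^*=S\,(S^*S)\,S^{-1}$, so $SS^*$ and $S^*S$ are similar operators and therefore have identical spectra. Consequently their infima and suprema coincide, which gives $A_{\mathrm{frame}}=A_{\mathrm{Riesz}}$ and $B_{\mathrm{frame}}=B_{\mathrm{Riesz}}$, as claimed. The main obstacle is functional-analytic rather than computational: one must check that in the infinite-dimensional setting the optimal constants really are the extreme points of the spectrum (a standard fact for bounded positive self-adjoint operators) and that $0\notin\operatorname{spec}(S^*S)\cup\operatorname{spec}(SS^*)$, which again rests on the invertibility of $S$. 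In finite dimensions this is just the elementary fact that $M^*M$ and $MM^*$ share the same nonzero eigenvalues, so the argument is transparent there, and the Riesz-basis hypothesis is precisely what removes the troublesome eigenvalue $0$.
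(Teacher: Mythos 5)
Your proof is correct, and it is genuinely more than what the paper offers: the paper's entire proof of this lemma is the citation \cite[Proposition 3.5.5]{Cr}, so your proposal supplies the operator-theoretic argument that lies behind that reference rather than reproducing anything in the text. Your route — synthesis operator $S$, identification of the optimal Riesz-sequence constants with the extreme spectrum of $S^*S$ and of the optimal frame constants with the extreme spectrum of $SS^*$, then the similarity $SS^*=S\,(S^*S)\,S^{-1}$ coming from invertibility of $S$ — is sound at every step: the variational characterization of the spectral bounds of a bounded positive self-adjoint operator is standard, and similarity via a bounded invertible operator preserves spectra. It is worth observing that your argument is exactly the infinite-dimensional counterpart of the paper's own Lemma \ref{L2-eigen-TT*} (that $\M\M^*$ and $\M^*\M$ have the same eigenvalues), which the paper proves separately and uses in the proof of Theorem \ref{T1-N-interv}; your similarity trick and the paper's multiply-by-$\M^*$ trick are the same idea in two guises, and your version makes the connection between the two halves of Theorem \ref{T1-N-interv} (the frame bounds via ${\bf B}={\bf \Gamma}^*{\bf \Gamma}$ and the Riesz-sequence bounds via $\A={\bf \Gamma}{\bf \Gamma}^*$) conceptually transparent. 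The one ingredient you assert rather than prove is that $S$ is a bounded bijection of $\ell^2(\N)$ onto $H$; this does follow from the paper's definition of Riesz basis (the Riesz-sequence inequality makes $S$ bounded and bounded below, hence injective with closed range, while the lower frame bound makes $S^*$ injective, so the range is also dense), and while it is fair to quote it as a standard fact, spelling it out would make the proof fully self-contained.
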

\begin{proof}
Follows from   \cite[Proposition  3.5.5]{Cr}.
\end{proof}

 The following   is   Lemma 3 in  \cite {KN2}. 
\begin{Lemma} \label{lemma3}  $E(\Lambda)\subset E(\Z^d)$  is a frame on a domain $ D \subset Q_0$  if and only if $E(\Z^d -\Lambda)$ is a Riesz
sequence on $Q_0-D$.
\end{Lemma}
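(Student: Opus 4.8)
The plan is to exploit the fact that $E(\Z^d)$ is an orthonormal basis of $L^2(Q_0)$, so that everything can be phrased in terms of two orthogonal projections on the single Hilbert space $H=L^2(Q_0)$. Let $P_D$ be the \emph{spatial} projection given by multiplication by the indicator $\mathbf 1_D$; its range is the copy of $L^2(D)$ sitting inside $H$ (functions extended by zero), and $I-P_D$ is multiplication by $\mathbf 1_{Q_0-D}$. Let $P_\Lambda f=\sum_{\vec n\in\Lambda}\langle f,e_{\vec n}\rangle e_{\vec n}$ be the \emph{spectral} projection onto the closed span of $\{e_{\vec n}\}_{\vec n\in\Lambda}$, where $e_{\vec n}=e^{2\pi i\langle\vec n,x\rangle}$; its complement $I-P_\Lambda$ is the spectral projection attached to the complementary frequencies $\Z^d-\Lambda$. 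For $g\in L^2(D)$ extended by zero one has $g=P_D g$ and $\langle g,e_{\vec n}\rangle_{L^2(D)}=\langle g,e_{\vec n}\rangle_H$.

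First I would observe that in both the frame inequality for $E(\Lambda)$ and the Riesz inequality for $E(\Z^d-\Lambda)$ the upper bound is automatic, with constant $1$: a subfamily of an orthonormal basis obeys Bessel's inequality with constant $1$, and a projection has norm at most $1$. Hence the entire content of each of the two conditions lies in the existence of a strictly positive lower bound.

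Next I would rewrite each lower bound using the Pythagorean identity $\|P_\Lambda g\|^2+\|(I-P_\Lambda)g\|^2=\|g\|^2$. On the frame side, $\sum_{\vec n\in\Lambda}|\langle g,e_{\vec n}\rangle|^2=\|P_\Lambda g\|^2$, so the lower frame inequality on $\mathrm{Ran}(P_D)$ is equivalent to $c:=\|(I-P_\Lambda)P_D\|^2<1$, with optimal lower bound $A=1-c$. On the Riesz side, writing $h=\sum_{\vec m\in\Z^d-\Lambda}a_{\vec m}e_{\vec m}\in\mathrm{Ran}(I-P_\Lambda)$ with $\|h\|^2=\sum|a_{\vec m}|^2$, and using $\|h\|^2_{L^2(Q_0-D)}=\|(I-P_D)h\|^2=\|h\|^2-\|P_D h\|^2$, the lower Riesz inequality is equivalent to $c':=\|P_D(I-P_\Lambda)\|^2<1$.

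Finally, the two quantities coincide: since $P_D$ and $P_\Lambda$ are self-adjoint, the operators $(I-P_\Lambda)P_D$ and $P_D(I-P_\Lambda)$ are adjoints of one another and therefore have equal operator norm, giving $c=c'$. Thus $E(\Lambda)$ has a positive lower frame bound on $L^2(D)$ if and only if $E(\Z^d-\Lambda)$ has a positive lower Riesz bound on $L^2(Q_0-D)$, which is the claimed equivalence. The only delicate point — and the closest thing to an obstacle — is the bookkeeping: matching the two set differences with the two \emph{complementary} projections $I-P_D$ and $I-P_\Lambda$, verifying that the frame/Riesz elements really are the compressions $P_D e_{\vec n}$ and $(I-P_D)e_{\vec m}$, and checking that the strict inequalities $c<1$ and $c'<1$ are exactly the positivity conditions for the lower bounds. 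Once this is set up, the functional-analytic core collapses to the triviality that an operator and its adjoint have the same norm.
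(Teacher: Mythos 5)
Your proof is correct, and it is worth noting that the paper itself offers no argument to compare against: Lemma \ref{lemma3} is simply quoted as Lemma 3 of \cite{KN2} (Kozma--Nitzan), so you have supplied a proof where the paper supplies a citation. Your argument is the standard two-projections duality, and it is complete: identifying $L^2(D)$ with $\mathrm{Ran}(P_D)\subset L^2(Q_0)$ by extension by zero, the lower frame bound for $E(\Lambda)$ becomes $\|(I-P_\Lambda)P_D\|<1$, the lower Riesz bound for $E(\Z^d-\Lambda)$ becomes $\|P_D(I-P_\Lambda)\|<1$, and these coincide because the two operators are adjoints of one another; the upper bounds are free with constant $1$ by Bessel and by $\|I-P_D\|\le 1$. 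The only small points to make explicit are the ones you already flag: the Riesz-sequence condition is stated for finite sums, but finite sums are dense in $\mathrm{Ran}(I-P_\Lambda)$ and both sides of the inequality are continuous, so passing to the closed span is harmless; and the supremum defining $\|(I-P_\Lambda)P_D\|$ over all of $L^2(Q_0)$ agrees with the supremum over unit vectors of $\mathrm{Ran}(P_D)$ since $P_D$ fixes that subspace and contracts everything else. Beyond settling the equivalence, your version yields quantitative information the bare statement does not: the optimal lower frame bound of $E(\Lambda)$ on $D$ and the optimal lower Riesz bound of $E(\Z^d-\Lambda)$ on $Q_0-D$ are both equal to $1-\|(I-P_\Lambda)P_D\|^2$, which is in the same spirit as the explicit frame-constant bookkeeping done elsewhere in the paper (Lemma \ref{L-same-consts}, Theorem \ref{T1-N-interv}).
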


The following is  Proposition 3.2.8 in \cite{Cr}
\begin{Lemma}\label{L-normalized-tight}
A sequence of  unit vectors   ${\cal V }\subset H$ is a Parseval    frame   if and only if it  is an orthonormal Riesz basis.
\end{Lemma}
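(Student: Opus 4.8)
The plan is to prove the two implications separately, deriving everything from Parseval's identity. The forward implication is immediate: if ${\cal V}=\{v_j\}$ is an orthonormal Riesz basis, i.e.\ a complete orthonormal system, then each $v_j$ is a unit vector and Parseval's identity gives $\sum_j |\langle w, v_j\rangle|^2 = \|w\|^2$ for every $w\in H$, which is precisely \eqref{e2-frame} with $A=B=1$. Hence ${\cal V}$ is a Parseval frame of unit vectors.

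For the converse, suppose ${\cal V}=\{v_j\}$ is a Parseval frame with $\|v_j\|=1$ for all $j$. The crux of the argument is to feed the test vectors $w=v_k$ into the defining identity. Since $\|v_k\|=1$, the Parseval identity yields $\sum_j |\langle v_k, v_j\rangle|^2 = 1$; splitting off the diagonal term $j=k$, which equals $\|v_k\|^4=1$, forces $\sum_{j\ne k} |\langle v_k, v_j\rangle|^2 = 0$ and therefore $\langle v_k, v_j\rangle = 0$ whenever $j\ne k$. Thus ${\cal V}$ is orthonormal. To upgrade this orthonormal system to an orthonormal basis I would invoke completeness, which follows from the lower frame bound: any $w$ orthogonal to every $v_j$ satisfies $\|w\|^2 \le \sum_j |\langle w, v_j\rangle|^2 = 0$, so $w=0$.

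I expect no genuine obstacle; the single nonroutine idea is the substitution $w=v_k$, which converts the quadratic Parseval identity into a statement about off-diagonal inner products. The only points demanding a little care are bookkeeping ones: identifying ``orthonormal Riesz basis'' with ``complete orthonormal system,'' and confirming that such a system is a Riesz basis with $A=B=1$ by applying Parseval's identity to finite combinations $\sum_j a_j v_j$.
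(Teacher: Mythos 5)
Your proof is correct. The paper does not actually prove this lemma itself --- it simply quotes it as Proposition 3.2.8 of \cite{Cr} --- and your argument (substituting $w=v_k$ into the Parseval identity to force the off-diagonal inner products to vanish, then using the lower frame bound to get completeness, and finally noting that an orthonormal basis satisfies both Riesz-basis inequalities with $A=B=1$) is exactly the standard proof found in that reference, so you have in effect supplied the details the paper delegates to the citation.
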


From Lemma \ref{L-normalized-tight} follows that an exponential basis of $L^2(D)$ is orthognal if and only if it is a tight frame of $L^2(D)$ with frame constant $|D|$.

\section{Families of isometries  in $\ell^2(\Z^d)$}\label{S-isometry}

The proof of   Theorem \ref{T1-N-interv} in dimension $d=1$  lead  G. Shaikh Samad  and the author of this paper  to the investigation of  a one-parameter   family of operators $\{T_t \}_{t\in\R}$  defined in $\ell^2=\ell^2(\Z)$   as follows: 
\begin{equation}\label{e2-defT}
(T_t(\vec a))_m= \begin{cases}  \dsize \frac{ \sin ( \pi t)}{ \pi}\sum_{  n\in\Z }      \frac{a_n  }{m-   n  +t} 
& \mbox{ if $t\not\in\Z$}
\cr  
 \dsize   (-1)^t a_{m+t} & \mbox{if $t \in\Z$}. \end{cases}
\end{equation}
See \cite{DS}.
When $t $ is not an integer   (and in particular when $t\in(-1,1)$),  these operators    can be viewed as discrete versions of the     Hilbert transform in $L^2(\R)$ (see \cite{Lae} and the reference therein). 
The main result in \cite{DS}  is the following:
   
\begin{Thm}\label{T2-semigr-H}
The family $\{T_t\}_{t\in\R}$ defined in \eqref{e2-defT} is a strongly continuous  Abelian group   of isometries  in $ \ell^2 $; its infinitesimal generator is $\pi H$, where  
$  (H(\vec a))_m= \frac 1 \pi \sum_{n\in\Z\atop n\ne m} \frac{a_n  }{m- {  n}  } 
$ 
is   the {\it discrete Hilbert transform}.  
\end{Thm}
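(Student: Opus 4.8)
The plan is to diagonalize the whole family $\{T_t\}$ defined in \eqref{e2-defT} simultaneously by passing to the Fourier side. Let $\mathcal{F}\colon \ell^2(\Z)\to L^2(0,1)$ be the unitary map $\mathcal{F}\vec a(\xi)=\sum_{n\in\Z}a_n e^{2\pi i n\xi}$, under which convolution operators become multiplication operators. For $t\notin\Z$ the sequence $(T_t\vec a)_m$ is the convolution of $\vec a$ with the kernel $c_k=\frac{\sin(\pi t)}{\pi(k+t)}$, and this sum converges absolutely for $\vec a\in\ell^2$ by Cauchy--Schwarz since $\sum_k (k+t)^{-2}<\infty$ (note $k+t\ne 0$). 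Hence $T_t=\mathcal{F}^{-1}M_{\sigma_t}\mathcal{F}$, multiplication by the symbol $\sigma_t(\xi)=\sum_k c_k e^{2\pi i k\xi}$. The first and main computation is to identify this symbol: I would compute the $k$-th Fourier coefficient of the candidate $e^{i\pi t}e^{-2\pi i t\xi}$ on $(0,1)$ and check that it equals $c_k$, which a one-line integration gives, yielding $\sigma_t(\xi)=e^{i\pi t}\,e^{-2\pi i t\xi}$ on $\xi\in(0,1)$. For $t\in\Z$ the kernel is $(-1)^t\delta_{k,-t}$, whose symbol $(-1)^t e^{-2\pi i t\xi}=e^{i\pi t}e^{-2\pi i t\xi}$ agrees with the same formula; thus $\sigma_t$ is a single expression valid for all real $t$, which also explains the $(-1)^t$ in the integer case.

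Once the symbol is in hand, the group and isometry statements are immediate and purely formal. Indeed $|\sigma_t(\xi)|=1$ shows each $M_{\sigma_t}$, hence each $T_t$, is unitary and in particular an isometry; and $\sigma_s(\xi)\sigma_t(\xi)=e^{i\pi(s+t)}e^{-2\pi i(s+t)\xi}=\sigma_{s+t}(\xi)$ together with $\sigma_0\equiv 1$ gives $T_sT_t=T_{s+t}=T_tT_s$ and $T_0=I$, so $\{T_t\}$ is an Abelian group of isometries. Strong continuity reduces, via $\mathcal{F}$, to continuity of $t\mapsto \sigma_t f$ in $L^2(0,1)$ for fixed $f$; since $\sigma_t(\xi)=e^{i\pi t(1-2\xi)}$ depends continuously on $t$ with $|\sigma_t|=1$, dominated convergence with dominating function $2|f|$ closes this step.

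For the generator I would use the symbol calculus once more. The second key computation is the classical sawtooth identity $\sum_{k\ne 0}\frac{e^{2\pi i k\xi}}{k}=i\pi(1-2\xi)$ on $(0,1)$, which shows the symbol of the discrete Hilbert transform $H$ is $i(1-2\xi)$ and hence the symbol of $\pi H$ is $i\pi(1-2\xi)$. Since this function is bounded, $\pi H$ is a bounded operator on $\ell^2$ (so the group is in fact uniformly continuous), and I would verify directly that $\frac{\sigma_t(\xi)-1}{t}=\frac{e^{i\pi t(1-2\xi)}-1}{t}\to i\pi(1-2\xi)$, uniformly in $\xi$ because $1-2\xi$ stays in $[-1,1]$; therefore $\frac{T_t-I}{t}\to \pi H$ in operator norm, identifying $\pi H$ as the infinitesimal generator. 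Equivalently, $e^{t\pi H}$ has symbol $e^{t\,i\pi(1-2\xi)}=\sigma_t$, so $T_t=e^{t\pi H}$.

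The only genuine obstacle is the first symbol computation: pinning down the phase $e^{i\pi t}$ and the exponent $-2\pi i t\xi$ exactly, and in particular being careful that $\sigma_t$ is an honest $L^2(0,1)$ function that equals $e^{-2\pi i t\xi}$ on the fundamental interval $(0,1)$ only, with a jump at the endpoints (it is not the $1$-periodic extension of $e^{-2\pi i t\xi}$). Everything else---the isometry property, the group law, strong continuity, and the identification of the generator---then follows formally from the two Fourier-series identities.
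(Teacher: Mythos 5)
Your proof is correct, but there is nothing in this paper to compare it against: Theorem \ref{T2-semigr-H} is stated here as a quotation of the main result of \cite{DS}, and the paper's ``proof'' is the citation itself. Judged on its own terms, your Fourier-multiplier route is the natural one and the two key computations check out: the $k$-th Fourier coefficient of $e^{i\pi t}e^{-2\pi i t\xi}$ on $(0,1)$ is $e^{i\pi t}(1-e^{-2\pi i t})/(2\pi i (k+t))=\sin(\pi t)/(\pi(k+t))$, which matches the kernel of $T_t$ for $t\notin\Z$ and degenerates correctly to $(-1)^t\delta_{k,-t}$ for $t\in\Z$; and the sawtooth identity gives $i\pi(1-2\xi)$ as the symbol of $\pi H$, so that $\sigma_t=e^{t\, i\pi(1-2\xi)}$. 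Everything downstream (group law, unitarity, continuity, generator) is then formal, as you say. Two points to tighten. First, the identification $T_t=\mathcal{F}^{-1}M_{\sigma_t}\mathcal{F}$ deserves one explicit line: the convolution $(c\ast a)_m$ is defined pointwise (absolutely convergent by Cauchy--Schwarz), and it agrees with the $L^2$-defined multiplier operator because, by Parseval,
\begin{equation*}
\int_0^1\sigma_t(\xi)\,\mathcal{F}a(\xi)\,e^{-2\pi i m\xi}\,d\xi=\sum_{k\in\Z}c_k\,a_{m-k},
\end{equation*}
which uses $\sigma_t\in L^\infty$ so that $\sigma_t\mathcal{F}a\in L^2(0,1)$; a priori the pointwise convolution and the multiplier could differ. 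Second, your argument proves strictly more than the statement: since the generator has bounded symbol, $\pi H$ is bounded and the group is \emph{uniformly} continuous, with $T_t=e^{t\pi H}$ as a norm-convergent exponential; the same symbol calculus also yields for free the adjoint identities $T_t^*=T_{-t}=T_t^{-1}$ and the multidimensional statements of Theorem \ref{T2-multi-T} (tensorize the one-dimensional multipliers), both of which this paper likewise states without proof.
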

 That is, we proved that
 $ T_s\circ T_t=T_{s+t}$;  that    for every $ \vec a\in \ell^2$,  the application  $t\to  T_t(\vec a)$   is continuous in $\R$;  that $||T_t(\vec a)||_{\ell^2} =||\vec a||_{\ell^2} $;   
 and finally  that, for every $\vec a\in \ell^2$,   
$
\lim_{t\to 0} \frac{T_t(\vec a)- \vec a}{t}=\pi H(\vec a), 
$ where the limit is in $\ell^2$.

Furthermore,   $T_t^*=T_{-t}=T^{-1}_t$  (see  Lemma 4.4 in \cite{DS}). 
\medskip

In this paper we   define multi-variable versions of the  operators $T_t$.   
We let $$\ell^2(\Z^d)=\{\vec a= \{a_{\vec n}\}_{\vec n\in\Z^d} \ : \  ||\vec a||_{\ell^2(\Z^d)}^2= \mbox{$ \sum_{n\in\Z^d} |a_{\vec n}|^2$}  <\infty\}.
$$
We denote  with  $\vec e_j$   the vector in $ \R^d$ whose components are all $=0$, with the exception of the $j-$th component  which is $=1$. 
 
Let $j\leq d$ and $s\in\R$; we define the operator 
 $T_{s\vec e_j}:\ell^2(\Z^d)\to\ell^2(\Z^d)$ as follows:
 \begin{equation}\label{e2-Td-multi}
 (T_{s\vec e_j}(  \vec a ))_{\vec n}=\begin{cases}  \dsize \frac{ \sin ( \pi s)}{ \pi}\sum_{m\in\Z }     
  \frac{a_{(n_1, ...,\,n_{j-1},\, m,\, n_{j+1},\,...n_d)}  }{n_j-  m  +s} 
& \mbox{ if $s\not\in\Z$}
\cr  
 \dsize   (-1)^s a_{(n_1,\, ...,\, n_j+s, ... n_d)} & \mbox{if $s \in\Z$}. \end{cases}
\end{equation}
For a given $\vec t=(t_1,\, ...,\, t_d)\in\R^d$, we let
 $T_{\vec t}  =T_{t_1\vec e_1}\circ...\circ T_{t_d\vec e_d}$.

For example,     when $\vec t \in (0, 1)^d $ we can easily verify that
$ 
(T_{\vec t} \,(\vec a))_{\vec m}=    \frac{\prod_{j=1}^d \sin ( \pi t_j)}{\pi^d}\sum_{\vec n\in\Z^d} \frac{a_{\vec n}  }{\prod_{j=1}^d(m_j-   n_j  +t_j)}.
$ 

\medskip
 The following multi-dimensional version of  Theorem \ref{T2-semigr-H} and Lemma 4.4 in \cite{DS} are easy to prove.
\begin{Thm}\label{T2-multi-T}
For every $\vec d,\ \vec t\in \R^d$ and every $\vec a\in \ell^2(\Z^d)$, 
\begin{equation}\label{e2-identities T}
T_{\vec d}\circ T_{\vec t}\,(\vec a)=  T_{\vec t}\circ T_{\vec d}\,(\vec a) =T_{\vec t+\vec d}\,(\vec a);
\end{equation}
  furthermore
\begin{equation}\label{e2-identities T2}
 T_{\vec t}^{-1}\,(\vec a)  =T_{-\vec t }\,(\vec a),
\end{equation}
and 
$$
||T_{\vec d}\, (\vec a)||_{\ell^2(\Z^d)} = ||\vec a||_{\ell^2(\Z^d)}.
$$
b) Let $ T_{\vec t} ^*$ be the adjoint of $T_{\vec t}$. Then, for every $\vec t\in\R^d$ and every $\vec a,\ \vec b\in \ell^2(\Z^d)$, we have that 
$$ T_{\vec t} ^*(\vec a)= T_{-{\vec t}}\,(\vec a)  $$
and
\begin{equation}\label{id-T*}
\l T_{\vec t}\,(\vec a),\ T_{\vec t }^*\,(\vec b) \r= \l \vec a, \ \vec b\r.
\end{equation}

\end{Thm}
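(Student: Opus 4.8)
The plan is to reduce every assertion to the one–dimensional statements of Theorem \ref{T2-semigr-H} and of Lemma 4.4 in \cite{DS} by exploiting the fact that each factor $T_{s\vec e_j}$ in the definition of $T_{\vec t}$ acts on a single coordinate. Concretely, for fixed values of the indices $n_i$ with $i\ne j$, formula \eqref{e2-Td-multi} shows that $T_{s\vec e_j}$ applies the one–dimensional operator $T_s$ of \eqref{e2-defT} to the fiber $\vec c=\{a_{(n_1,\dots,n_{j-1},m,n_{j+1},\dots,n_d)}\}_{m\in\Z}$ and leaves the remaining coordinates untouched. By Tonelli's theorem, for $\vec a\in\ell^2(\Z^d)$ almost every such fiber lies in $\ell^2(\Z)$, and $\|\vec a\|_{\ell^2(\Z^d)}^2$ is the sum, over all choices of the frozen coordinates, of the squared $\ell^2(\Z)$–norms of the fibers. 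This fiberwise picture — equivalently, the identification of $T_{s\vec e_j}$ with $I\otimes\cdots\otimes T_s\otimes\cdots\otimes I$ on $\ell^2(\Z)^{\otimes d}$, with $T_s$ in the $j$-th slot — is the backbone of all the computations.

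First I would prove the isometry property for a single factor: since $T_s$ is an isometry on $\ell^2(\Z)$ by Theorem \ref{T2-semigr-H}, summing the fiberwise identity $\|T_s\vec c\|_{\ell^2(\Z)}^2=\|\vec c\|_{\ell^2(\Z)}^2$ over all frozen coordinates yields $\|T_{s\vec e_j}\vec a\|=\|\vec a\|$; as a composition of isometries, $T_{\vec t}$ is then an isometry, which is the last displayed identity of part a). Next I would establish the group law in two steps. For two factors in the same coordinate, $T_{s\vec e_j}\circ T_{u\vec e_j}=T_{(s+u)\vec e_j}$ follows by applying the one–dimensional semigroup relation $T_s\circ T_u=T_{s+u}$ fiberwise, treating separately the integer and non-integer cases of \eqref{e2-Td-multi}. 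For two factors in different coordinates, $T_{s\vec e_i}$ and $T_{u\vec e_j}$ with $i\ne j$ act on disjoint groups of indices, so they commute; rigorously this amounts to interchanging the two single–variable sums, justified by the absolute convergence furnished by the one–dimensional $\ell^2$–bounds. Combining these two facts with the definition $T_{\vec t}=T_{t_1\vec e_1}\circ\cdots\circ T_{t_d\vec e_d}$ yields \eqref{e2-identities T}, and specializing $\vec d=-\vec t$ gives $T_{\vec t}\circ T_{-\vec t}=T_{\vec 0}=\mathrm{Id}$, hence \eqref{e2-identities T2}.

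For part b), I would first compute the adjoint of a single factor. Because the one–dimensional Lemma 4.4 in \cite{DS} gives $T_s^*=T_{-s}$, and the inner product on $\ell^2(\Z^d)$ splits, by Tonelli, as a sum of the one–dimensional inner products along the fibers, the fiberwise action shows $T_{s\vec e_j}^*=T_{-s\vec e_j}$. Taking adjoints of the composition and reversing the order, $T_{\vec t}^*=T_{t_d\vec e_d}^*\circ\cdots\circ T_{t_1\vec e_1}^*=T_{-t_d\vec e_d}\circ\cdots\circ T_{-t_1\vec e_1}$, which by the commutativity already proved equals $T_{-\vec t}$. The remaining identity \eqref{id-T*} is then immediate from this adjoint formula combined with the semigroup and isometry relations of part a).

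I expect the main obstacle to be technical rather than conceptual: making the fiberwise reduction fully rigorous, that is, justifying the Fubini/Tonelli interchanges and the absolute convergence of the iterated series that define the compositions $T_{s\vec e_i}\circ T_{u\vec e_j}$. Once one verifies that the one–dimensional $\ell^2$–boundedness in Theorem \ref{T2-semigr-H} guarantees that every relevant double (and, more generally, $d$-fold) series converges absolutely for $\vec a\in\ell^2(\Z^d)$, all the algebraic identities follow mechanically from their one–dimensional counterparts, and the integer versus non-integer case distinction in \eqref{e2-Td-multi} causes no genuine difficulty.
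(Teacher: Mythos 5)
Your fiberwise (tensor-product) reduction is exactly the argument the paper has in mind: in fact the paper offers no proof at all of Theorem \ref{T2-multi-T} --- it is merely stated to be ``easy to prove'' as a multi-dimensional version of Theorem \ref{T2-semigr-H} and Lemma 4.4 of \cite{DS} --- so your write-up supplies precisely the details the paper omits. Your treatment of part a) and of the adjoint formula $T^*_{\vec t} = T_{-\vec t}$ is sound: each factor $T_{s\vec e_j}$ acts as the one-dimensional $T_s$ on every fiber (note that \emph{every} fiber of an $\ell^2(\Z^d)$ sequence lies in $\ell^2(\Z)$, not just almost every one --- counting measure has no nontrivial null sets), the same-coordinate composition law is the one-dimensional group law applied fiberwise, and your justification of the interchange of sums for factors in different coordinates is the only genuine technical point and is correct: the kernel $\bigl((n_i-m+s)(n_j-k+u)\bigr)^{-1}$ is square-summable in $(m,k)$, so Cauchy--Schwarz gives absolute convergence of the double series and Fubini applies.

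However, your final sentence --- that \eqref{id-T*} is ``immediate from this adjoint formula combined with the semigroup and isometry relations'' --- cannot stand, because \eqref{id-T*} as printed is false, so no combination of the previously established facts can yield it. Indeed, from $T_{\vec t}^* = T_{-\vec t}$ and the group law,
\begin{equation*}
\langle T_{\vec t}\,(\vec a),\ T_{\vec t}^*\,(\vec b)\rangle = \langle \vec a,\ (T_{\vec t}^*)^2\,(\vec b)\rangle = \langle \vec a,\ T_{-2\vec t}\,(\vec b)\rangle ,
\end{equation*}
which equals $\langle\vec a,\vec b\rangle$ for all $\vec a,\vec b$ only when $\vec t=\vec 0$. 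Concretely, in $d=1$ with $t=1$, $\vec a=\delta_0$, $\vec b=\delta_{-2}$ (Kronecker deltas), one has $T_1\vec a=-\delta_{-1}$ and $T_1^*\vec b=T_{-1}\vec b=-\delta_{-1}$, so the left-hand side of \eqref{id-T*} equals $1$ while $\langle\vec a,\vec b\rangle=0$. A blind attempt should have detected this and replaced \eqref{id-T*} by what is evidently intended (and what is actually used later, in the proof of Lemma \ref{L2-inner-prod-Td}): either the defining adjoint relation $\langle T_{\vec t}\,(\vec a),\,\vec b\rangle=\langle\vec a,\,T_{-\vec t}\,(\vec b)\rangle$, or the polarized isometry $\langle T_{\vec t}\,(\vec a),\,T_{\vec t}\,(\vec b)\rangle=\langle\vec a,\,\vec b\rangle$; each of these \emph{is} immediate from your part a) together with the adjoint formula. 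With that correction your proof is complete and coincides with the intended one.
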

%


\section{
Proofs}\label{S-proofs}

Let $Q= Q_1\cup...\cup Q_{N}$ be as in \eqref{defQ}.  By  \eqref{e2-frame} and \eqref{e2- Riesz-sequence},    the set $\B=\cup_{p=1}^{N} \{e^{2\pi i \vec x (\vec n+\vec \delta_p)}\}_{\vec n\in\Z^d}$  is a Riesz basis in $L^2(Q)$ if and only if there exists $A,\ B>0$ for which the following hold:
 \begin{equation}\label{e2-cond-span}
A||f||_{L^2(Q)}^2\leq \sum_{j=1}^{N}\sum_{\vec n\in\Z^d}\langle f, e^{ 2\pi i \l \vec n+\vec \delta_j,\,  x\r}\rangle^2_{L^2(Q)}  
 \leq B||f||_{L^2(Q)}^2 
\end{equation}
 whenever   $f \in L^2(Q)$, and 
\begin{equation}\label{e2-cond-lin-ind} A \sum_{j=1}^{N}\sum_{\vec n\in\Z^d} | a_{j,\vec n }|^2 \leq  \left\Vert \sum_{j=1}^{N}\sum_{\vec n\in\Z^d} a_{ j,\vec n } e^{ 2\pi i \l \vec n+\vec \delta_j , \,x\r}  \right\Vert_{L^2(Q)}^2\!\!\!\!\!\leq  B   \sum_{j=1}^{N}\sum_{\vec n\in\Z^d} | a_{\vec n\,j}|^2 \end{equation} 
for  every  finite  set of coefficients $\{\vec a_{j,\vec n}\}_{j\leq N\atop{\vec n\in\Z^d}}\subset\C$. 

\medskip
To prove Theorem \ref{T1-N-interv} we need the    following important

\begin{Lemma}\label{L2-inner-prod-Td}
Let $Q_{\vec M}=  Q_0+\vec M$, with $\vec M\in\R^d$.  Let $\vec a,\ \vec b\in\ell^2(\Z^d)$ and  $\vec t$, $\vec s\in \R^d$. Let $T_{\vec t}$ be   as in  Section 3.
Then, 
\begin{equation}\label{e2-inner-Sj} \l \sum_{\vec n\in\Z^d} a_{\vec n} e^{2\pi i\, \l \vec n+\vec s ,\, x\r },\ \sum_{\vec m\in\Z^d} b_{\vec m}e^{2\pi i\l \vec m +\vec t,\,   x\r}\r_{L^2(Q_{\vec M})}
\end{equation}
$$
=  e^{2\pi i \l\vec s -\vec t,\,  \vec M \r }\l  T_{\vec t}\,(\vec \alpha ), \   T_{\vec s}\,(\vec\beta)\r_{\ell^2(\Z^d)} 
$$
where

\noindent
  $\vec \alpha  = ((-1)^{n_1+...+n_d} e^{2\pi i \l \vec n,\, \vec  M\r } a_{\vec n})_{\vec n\in\Z^d}$, $\vec\beta = ((-1)^{m_1+...+m_d}e^{2\pi i\l \vec m,\, \vec  M\r}b_{\vec m})_{\vec m\in\Z^d}$.
 \end{Lemma}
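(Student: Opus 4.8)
The plan is to reduce the inner product of the two exponential sums to a double series of elementary integrals, evaluate each integral explicitly, and then recognize the resulting kernel as that of the operator $T_{\vec s-\vec t}$. First I would record that for any fixed $\vec s$ the family $\{e^{2\pi i\l\vec n+\vec s,\,x\r}\}_{\vec n\in\Z^d}$ is orthonormal on $Q_{\vec M}$ (translation invariance of Lebesgue measure reduces this to the standard orthonormality on $Q_0$), so that both sums define genuine $L^2(Q_{\vec M})$ functions and the inner product in \eqref{e2-inner-Sj} is well defined. Expanding on finite partial sums and using bilinearity, everything comes down to computing the single scalar product $\l e^{2\pi i\l\vec n+\vec s,\,x\r},\ e^{2\pi i\l\vec m+\vec t,\,x\r}\r_{L^2(Q_{\vec M})}$.

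Then I would evaluate that scalar product by the change of variables $x=y+\vec M$, which carries $Q_{\vec M}$ onto $Q_0$ and produces the global phase $e^{2\pi i\l(\vec n-\vec m)+(\vec s-\vec t),\,\vec M\r}$ times the integral over $Q_0$ of $e^{2\pi i\l(\vec n-\vec m)+(\vec s-\vec t),\,y\r}$. The integral over the cube factors into one-dimensional integrals $\int_{-1/2}^{1/2}e^{2\pi i\alpha y_j}\,dy_j=\frac{\sin\pi\alpha}{\pi\alpha}$ with $\alpha=(n_j-m_j)+(s_j-t_j)$; since $n_j-m_j\in\Z$, the identity $\sin\pi(k+u)=(-1)^k\sin\pi u$ turns this into $\frac{(-1)^{n_j-m_j}\sin\pi(s_j-t_j)}{\pi((n_j-m_j)+(s_j-t_j))}$. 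Splitting $e^{2\pi i\l\vec n-\vec m,\,\vec M\r}$ and the sign $\prod_j(-1)^{n_j-m_j}=(-1)^{\sum n_j}(-1)^{\sum m_j}$ across the two indices, and pulling the index-free factor $e^{2\pi i\l\vec s-\vec t,\,\vec M\r}$ out of the double sum, is exactly what converts $a_{\vec n},b_{\vec m}$ into the sequences $\vec\alpha,\vec\beta$ of the statement.

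At this point the remaining double sum reads $\sum_{\vec n,\vec m}\alpha_{\vec n}\overline{\beta_{\vec m}}\prod_j\frac{\sin\pi(s_j-t_j)}{\pi((n_j-m_j)+(s_j-t_j))}$, which I would recognize as $\l\vec\alpha,\ T_{\vec s-\vec t}(\vec\beta)\r_{\ell^2(\Z^d)}$ by comparing with the product kernel of $T_{\vec s-\vec t}$ recorded in Section \ref{S-isometry} (valid when all components of $\vec s-\vec t$ are non-integers). Finally, Theorem \ref{T2-multi-T} gives $T_{\vec t}^*=T_{-\vec t}$ and $T_{-\vec t}\circ T_{\vec s}=T_{\vec s-\vec t}$, so that $\l\vec\alpha,\ T_{\vec s-\vec t}(\vec\beta)\r=\l\vec\alpha,\ T_{\vec t}^*T_{\vec s}(\vec\beta)\r=\l T_{\vec t}(\vec\alpha),\ T_{\vec s}(\vec\beta)\r$, which together with the extracted phase yields the claimed identity.

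The main technical point, rather than the bookkeeping above, is the passage from finite partial sums to the full series. I would handle it by noting that the partial-sum double sum equals $\l\vec\alpha_K,\ T_{\vec s-\vec t}(\vec\beta_L)\r$ for the truncated sequences $\vec\alpha_K,\vec\beta_L$; since truncation converges in $\ell^2$ and $T_{\vec s-\vec t}$ is a bounded (indeed isometric) operator, continuity of the $\ell^2$ inner product lets me pass to the limit on both sides simultaneously, so no absolute convergence of the double series is needed. The only other delicate point is the degenerate case in which some component $s_j-t_j$ is an integer: there the sinc factor and the product kernel must be replaced by the integer-shift branch of \eqref{e2-Td-multi} (equivalently, the one-dimensional integral becomes a Kronecker delta), and I would either treat this branch directly or invoke the strong continuity of $\vec t\mapsto T_{\vec t}$ from Theorem \ref{T2-semigr-H}.
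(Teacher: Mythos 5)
Your proposal is correct and follows essentially the same route as the paper's proof: expand the inner product into elementary integrals, use $\sin(\pi(k+u))=(-1)^{k}\sin(\pi u)$ to absorb the signs and phases into $\vec\alpha$ and $\vec\beta$, recognize the resulting kernel as that of $T_{\vec s-\vec t}$, and conclude with the group and adjoint identities of Theorem \ref{T2-multi-T}. The only differences are refinements rather than a new approach: you work directly in dimension $d$ (the paper treats $d=1$ and declares the general case similar), you justify the passage from finite to infinite sums via boundedness of $T_{\vec s-\vec t}$, and you offer a continuity argument for the case of integer components of $\vec s-\vec t$, which the paper instead computes directly.
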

 \noindent

 \begin{proof} 
 Assume $d=1$ since the proof  for $d>1$ is similar.  
 Assume first $s-t \not\in\Z$; We have
 
\begin{align} \nonumber
 &\l \sum_{n\in\Z}  a_{  n} e^{2\pi i\, ( n+ s )  x }, \ \sum_{m\in\Z}   b_{  m}e^{2\pi i(  m +  t)  x}\r_{L^2(M-\frac 12,M+\frac 12)}\\ \label{e-*}  & =
 \sum_{n,m\in\Z}a_n \bar b_m\int_{M-\frac 12}^{M+\frac 12}e^{2\pi i\, ( n-m+ s -t)  x }dx
 \\ \nonumber &
= e^{2\pi i ( s -t)  M } \sum_{n,m\in\Z} e^{2\pi i (n-m)  M }a_n \bar b_m\frac{ \sin( \pi (n-m+s -t))}{ \pi  (n-m+s -t)}
\\  \nonumber &
=e^{2\pi i (s -t)  M }\frac{\sin(\pi (s -t))}{\pi}\sum_{n,m\in\Z}\frac{\alpha_n \bar \beta_m}{  n-m+s -t }
\\ \nonumber & = e^{2\pi i (s -t)  M }\l T_{s -t}(\vec \alpha), \ \vec \beta\r_{\ell^2(\Z)}. 
\end{align}
The identity \eqref{e2-identities T} in Theorem \ref{T2-multi-T}   yields   
\eqref{e2-inner-Sj}.

When $s-t\in\Z$,  the integral in \eqref{e-*} equals $1 $ if $m=n +s-t $ and   is $=0$    in all other cases. We can 
write  
\begin{align*}
\eqref{e-*}=  &  \sum_{n \in\Z}a_{n } \bar b_{n+s-t}= (-1)^{s-t}e^{2\pi i (s -t) M   }\sum_{n \in\Z}\alpha_{n } \bar \beta_{n+s-t}\\ = & (-1)^{s-t} e^{2\pi i (s -t) M   }\l \vec \alpha, T_{t-s}(\vec \beta)\r.
\end{align*}
as required.
 \end{proof}

\subsection{ Proof of Theorem \ref{T1-N-interv}  }
 
 Let ${\bf \Gamma}={\bf \Gamma}(\l \vec M_p,\, \vec \delta_j\r)$ be as in \eqref{defGamma}. 
We  show  that \eqref{e2-cond-span} holds if and only if $\det{\bf \Gamma}\ne 0$.     
Let $f\in L^2(Q)$.
 For every $j\leq N$, we obtain
\begin{align}\nonumber
\sum_{ \vec n\in\Z^d}  &    | \l f, e^{ 2\pi i \l \vec n+\vec \delta_j,\, x\r}  \r_  {L^2(Q)}|^2 = \sum_{ \vec n\in\Z^d}      \left | \sum_{p=1}^{N}\l f, e^{ 2\pi i \l \vec n+\vec \delta_j,\, x\r}\r_{L^2( Q_0+\vec M_p)}\right |^2
    \\ \label{e3-frame-prod}& 
 =  \sum_{\vec n\in\Z^d } \sum_{p,  q=1}^{N}\l f, e^{ 2\pi i \l \vec n+\vec \delta_j,\, x\r}\r_{L^2( Q_0+\vec M_p)}\overline{\l f, e^{ 2\pi i \l \vec n+\vec \delta_j,\, x\r}\r_{L^2( Q_0+\vec M_q)}}.
\end{align}
  Observe that 
 \begin{align}\nonumber
 {\l f, e^{ 2\pi i \l \vec n+\vec \delta_j,\, x\r}\r_{L^2( Q_0+\vec M_p)}} &=\int_{ Q_0+\vec M_p }  f(x) e^{ -2\pi i \l \vec n+\vec \delta_j,\, x\r}dx \\ \nonumber &=
  \int_{Q_0}   f(y+\vec M_p) e^{ -2\pi i \l \vec n+\vec \delta_j,\,  y+\vec M_p\r}dy
  \end{align}
  \begin{align}\nonumber &
  =
  e^{-2\pi i \l\vec \delta_j,\,\vec M_p\r} \int_{Q_0}  \tau_{\vec M_p}f(y)  e^{ -2\pi i \l \vec n+\vec \delta_j,\, y\r}dy 
 \\  \label{e4-eq33}& = e^{-2\pi i \l\vec \delta_j,\,\vec M_p\r} \l \tau_{\vec M_p}f , \ e^{  2\pi i \l \vec n+\vec \delta_j,\, x\r}\r_{L^2(Q_0)}    
  \end{align}
 where $ \tau_{\vec v}f(x)= f(x+\vec v)$. Since the set $\{e^{2\pi i \l n+\delta_j, x\r}\}_{\vec n\in\Z^d}$ is an orthonormal basis on $L^2(Q_0)$, by Parseval identity and \eqref{e4-eq33}   the sum in \eqref{e3-frame-prod} equals   
\begin{equation}\label{e4-44}  
 \sum_{p, q=1}^{N}\!e^{ 2\pi i \l \vec\delta_j,\, \vec M_q-\vec M_p\r}\!\!\sum_{n\in\Z^d }  \l \tau_{\vec M_p}f  ,   e^{  2\pi i \l \vec n+\vec \delta_j,\, x\r}\r_{L^2(Q_0)}\overline{\l \tau_{\vec M_q}f ,   e^{  2\pi i \l \vec n+\vec \delta_j,\, x\r}\r_{L^2(Q_0)}  }
 $$$$=
 \sum_{p, q =1}^{N }e^{ 2\pi i \l \vec\delta_j,\, \vec M_q-\vec M_p\r} \l  \tau_{\vec M_p} f,\     \tau_{\vec M_q} f\r_{L^2(Q_0)}  
\end{equation}
 and  from \eqref{e3-frame-prod}  and \eqref{e4-44} follow that
\begin{equation}\label{e3-positive-sum}
 \sum_{j=1}^{N} \sum_{\vec n\in\Z^d}    |\langle f, e^{ 2\pi i \l\vec n+\vec \delta_j,\,  x\r}\rangle_{L^2(Q)}|^2 =
 \sum_{p, q =1}^{N } \beta_{p,q} \l  \tau_{\vec M_p} f,\     \tau_{\vec Mq} f\r_{L^2(Q_0)}
\end{equation}
 where we have let
 \begin{equation}\label{def-beta}
 \beta_{p,q}= \sum_{j=1}^{N}  e^{ 2\pi i \l \vec\delta_j,\, \vec M_q-\vec M_p\r},\quad 1\leq p,q\leq N.
 \end{equation}
 Let $ {\bf B}$ be the $N\times N$ matrix whose elements are the $\beta_{p,q} $.  We can let $\gamma_{j,p} =e^{ 2\pi i \l \vec\delta_j,\, \vec M_p\r}$  and  write   $\beta_{p,q}= \sum_{j=1}^{N} \overline{\gamma_{j,p}}\gamma_{j,q}$; thus,  ${\bf B}= {\bf \Gamma}^*{\bf \Gamma }$ where  ${\bf \Gamma}=\{\gamma_{j,p}\}_{1\leq j,p\leq N}$ is as   in  \eqref{defGamma} and  ${\bf \Gamma^*}$ is the   conjugate   transpose of ${\bf \Gamma}$.   
 
 The maximum (minimum) value of the Hermitian form $\l {\bf B}\vec v,\, \vec v\r$ on   $\S_\C^{N-1}$, the unit sphere of $\C^N$,  equal   the  maximum (minimum) eigenvalue of ${\bf B}$.
  %
 Let $\Lambda=\max_{x\in\S^{N-1}}\l {\bf B} x,\, x\r$, and $\lambda=\min_{x\in\S^{N-1}}\l {\bf B} x,\, x\r$. We have 
$$
\sum_{p, q =1}^{N } \beta_{p,q} \l  \tau_{\vec M_q} f,\     \tau_{\vec M_p} f\r_{L^2(Q_0)}
=
\int_{Q_0} \sum_{p, q =1}^{N } \beta_{p,q} f(x+\vec M_p)\overline{f(x+\vec M_q)}dx
$$
$$
\leq 
\Lambda \int_{Q_0} \sum_{p  =1}^{N }  |f(x+\vec M_p)|^2 dx=
\Lambda \sum_{p  =1}^{N } \int_{ Q_0+\vec M_p}   |f(x )|^2 dx= \Lambda ||f||_{L^2(Q)}^2.
$$
Similarly, we prove that $\sum_{p, q =1}^{N } \beta_{p,q} \l  \tau_{\vec M_q} f,\     \tau_{\vec M_p} f\r_{L^2(Q_0)}\ge \lambda  ||f||_{L^2(Q)}^2$.  
From \eqref{e3-positive-sum} and the  inequalities above follows \eqref{e2-cond-span},  with $A=\lambda$ and $B=\Lambda$.  

\medskip
We prove that 
$A=\lambda$ and $B= \Lambda$ are  the  optimal frame bounds of $\B$.  

Let $\vec w=(w_1, ...,\, w_d)$ be  an  eigenvector  of ${\bf B}$ with eigenvalue $\Lambda$; let   $g(x)=\sum_{q=1}^N w_q\chi_{Q_0-\vec M_q}(x)$, where $\chi $ is the characteristic function. The identity \eqref{e3-positive-sum} yields
\begin{align*}
\sum_{j=1}^{N} \sum_{\vec n\in\Z^d}  &  |\langle g, e^{ 2\pi i \l\vec n+\vec \delta_j,\,  x\r}\rangle_{L^2(Q)}|^2 = 
\int_{Q_0} \sum_{p, q =1}^{N } \beta_{p,q}\ \tau_{\vec M_p}g(x)\overline{\tau_{\vec M_q}g(x)}dx
\\ = &
\sum_{p, q =1}^{N } \sum_{h, k =1}^{N }\beta_{p,q}\,\omega_k\bar\omega_k \int_{Q_0} \chi_{Q_0-\vec M_h}(x+\vec M_p)\chi_{Q_0-\vec M_k}(x+\vec M_q) dx
\\ = &
 \sum_{p, q =1}^{N } \beta_{p,q}\ w_p\bar w_q=\Lambda \sum_{p =1}^{N } |w_p|^2=\Lambda||g||_{L^2(Q)}^2.
\end{align*}
 A similar argument shows that $A=\lambda$ is the optimal lower frame bound of $\B$.

\bigskip
 \noindent 
Let us prove  that  \eqref{e2-cond-lin-ind} holds if and only if $\det {\bf \Gamma} \ne 0$.   Let $\vec a_1$, ...,\, $\vec a_N\in\ell^2(\Z^d)$ be such that $a_{j,\vec n}=0$  for all but a finite set of $\vec n\in\Z^d$.    Let 
  $  \dsize S_j  =\sum_{\vec n \in\Z^d} a_{j, \vec {n }} e^{2\pi i\, \l\vec n +\vec \delta_j,\,  x\r }  .$ 
  The central sum in \eqref{e2-cond-lin-ind} equals $||S_1+...+S_{N}||^2_{L^2(Q)} $. By Lemma \ref{L2-inner-prod-Td}
\begin{align}\nonumber
||S_1+...+S_{N}||^2_{L^2(Q)}  & =   \sum_{1\leq i, j\leq N}\langle S_i,\ S_j\rangle_{L^2(Q)}\\ \nonumber &=
\sum_{1\leq i, j\leq N} \sum_{p=1}^{N}  \langle S_i,\ S_j\rangle_{L^2( Q_0+\vec M_p)}.
\\  \label{e3-basic-exp} 
 &=  \alpha_{i,j}\l T_{\vec \delta_i }(\vec a_i),  T_{ \vec \delta_j}(\vec a_j)\r_{L^2(\Z^d)} 
\end{align}
where  we have let
\begin{equation}\label{def-alpha}
\alpha_{i,j}= \sum_{p=1}^{N}e^{2\pi i \l\vec \delta_i-\vec \delta_j,\ \vec M_p \r}  
\end{equation}
Let $\A$ be the $N\times N$ matrix whose elements are  the $\alpha_{i,j}$.  It is easy to verify  that  $\A= {\bf \Gamma}{\bf \Gamma^*}$, where ${\bf \Gamma}$ is as in \eqref{defGamma}. 
The  matrices $\A={\bf \Gamma}{\bf \Gamma^*} $ and ${\bf B}={\bf \Gamma^*}{\bf \Gamma} $  have the same eigenvalues (see Lemma \ref{L2-eigen-TT*} below) and for every $\vec v\in\C^N$,
\begin{equation}\label{int11}\lambda  ||\vec v||^2\leq   \sum_{i,j=1}^{N} \alpha_{i,j}v_i \overline v_j\leq \Lambda ||\vec v||^2
\end{equation}
where $\Lambda$ and $\lambda$ are     as in the first part of the proof.
 
Let   $T_{\vec \delta_k }(\vec a_k) =  \vec b_k$. In view of  \eqref{e3-basic-exp},
\begin{equation}\label{int1}
||S_1+...+S_{N}||^2_{L^2(Q)}=\sum_{i,j=1}^{N} \alpha_{i,j}\l  \vec b_i, \vec b_j \r_{\ell^2(\Z^d)}=\sum_{\vec n\in\Z^d}\sum_{i,j=1}^{N} \alpha_{i,j}b_{i, \vec n}\overline b_{j, \vec n}.
\end{equation}
Fix $\vec n\in\Z^d$;
by \eqref{int11},
$$\lambda  (|b_{1, \vec n}|^2+...+|b_{N , \vec n}|^2)\leq \sum_{i,j=1}^{N} \alpha_{i,j}b_{i, \vec n}\overline b_{j, \vec n}\leq \Lambda  (|b_{1, \vec n}|^2+...+|b_{N, \vec n}|^2)
$$
and when we sum with respect to $\vec n$ we obtain  
\begin{equation}\label{int2}
\lambda   \sum_{j=1}^N ||\vec b_j||^2_{\ell^2(\Z^d)}\leq \sum_{\vec n\in\Z^d}\sum_{i,j=1}^{N} \alpha_{i,j}b_{i, \vec n}\overline b_{j, \vec n}\leq \Lambda  \sum_{j=1}^N ||\vec b_j||^2_{\ell^2(\Z^d)}. 
\end{equation}
Recalling that $\vec b_j=T_{\vec \delta_j}(\vec a_j)$ and that the $T_{\vec \delta_j}$   are invertible isometries,  we obtain $||\vec b_j||_{\ell^2(\Z^d)}=  ||\vec a_j||_{\ell^2(\Z^d)}$.
 From   \eqref{int1} and \eqref{int2} follows that
 $$
\lambda  \sum_{j=1}^N ||\vec a_j||^2_{\ell^2(\Z^d)} \leq ||S_1+...+S_{N}||^2_{L^2(Q)} \leq \Lambda  \sum_{j=1}^N ||\vec a_j||^2_{\ell^2(\Z^d)}. 
$$
and \eqref{e2-cond-lin-ind}   is proved with $ B=\Lambda$ and $A=\lambda$. By Lemma \ref{L-same-consts}, $\Lambda$ and $ \lambda$  are the optimal constants in \eqref{e2-cond-lin-ind}.
The proof of Theorem \ref{T1-N-interv} is completed. $\Box$
 
  \bigskip

  \noindent
  {\it Remark.}     The proof of Theorem \ref{T1-N-interv} shows that \begin{align*}
  &\mbox{$\B$ is a frame $\iff \det   {\bf B}\ne 0 \iff \det{\bf \Gamma}\ne 0$} 
  \\ & \mbox{$\iff \det\A\ne 0 \iff \B$ is a Riesz sequence}
  \end{align*}
  This observation proves Theorem \ref{Cor-equiv-bases-frames}.
  
  \medskip 
  For the sake of completeness, we  prove the following easy 
 \begin{Lemma}\label{L2-eigen-TT*} Let $\M$ be a  square   matrix.
 The matrices  ${\  M} {  M^*}$ and $ { M^*} {  M }$  have
 the same eigenvalues. 
 \end{Lemma}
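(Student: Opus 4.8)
The plan is to deduce the lemma from the general linear-algebra fact that, for any two $N\times N$ matrices $A$ and $B$, the products $AB$ and $BA$ have the same characteristic polynomial, and hence the same eigenvalues counted with multiplicity. The lemma then follows by taking $A=\M$ and $B=\M^*$. It is worth noting at the outset that $\M\M^*$ and $\M^*\M$ have the same size precisely because $\M$ is square; this is what makes the assertion about \emph{all} eigenvalues, and not merely the nonzero ones, correct.

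First I would record the easy half, that the nonzero eigenvalues coincide. If $\lambda\ne 0$ and $AB\,v=\lambda v$ with $v\ne 0$, then $Bv\ne 0$ (otherwise $\lambda v=A(Bv)=0$), and $BA(Bv)=B(AB\,v)=\lambda\,(Bv)$, so $\lambda$ is also an eigenvalue of $BA$. By symmetry the same holds with the roles of $AB$ and $BA$ exchanged, so the two products share exactly the same nonzero eigenvalues.

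The harder point, and the main obstacle, is to match the full spectrum, including the multiplicity of the eigenvalue $0$. Here I would argue at the level of characteristic polynomials. When $A$ is invertible one has $BA=A^{-1}(AB)A$, so $AB$ and $BA$ are similar and have identical characteristic polynomials; to remove the invertibility hypothesis one can replace $A$ by $A+\epsilon I$, which is invertible for all but finitely many $\epsilon$, observe that $\det(\lambda I-(A+\epsilon I)B)$ and $\det(\lambda I-B(A+\epsilon I))$ agree there, note that both sides are polynomials in $(\lambda,\epsilon)$ and therefore agree identically, and set $\epsilon=0$. Alternatively, and avoiding limits entirely, I would use the block identity
\[
\begin{pmatrix} I & A \\ 0 & I \end{pmatrix}
\begin{pmatrix} 0 & 0 \\ B & BA \end{pmatrix}
\begin{pmatrix} I & -A \\ 0 & I \end{pmatrix}
= \begin{pmatrix} AB & 0 \\ B & 0 \end{pmatrix},
\]
which exhibits a similarity between two block-triangular $2N\times 2N$ matrices whose diagonal blocks are $\{AB,0\}$ and $\{0,BA\}$; comparing the resulting characteristic polynomials $\lambda^N\det(\lambda I-AB)$ and $\lambda^N\det(\lambda I-BA)$ yields $\det(\lambda I-AB)=\det(\lambda I-BA)$.

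Finally, for the present Hermitian setting I would mention an even shorter route, which is really the reason the bookkeeping of $0$ causes no trouble. Since $\M\M^*$ and $\M^*\M$ are Hermitian and positive semidefinite, they are diagonalizable with real nonnegative eigenvalues, so for each of them algebraic and geometric multiplicities coincide. For $\lambda>0$ the maps $x\mapsto \M^* x$ and $x\mapsto \M x$ send the $\lambda$-eigenspace of $\M\M^*$ into that of $\M^*\M$ and back, and compose to multiplication by $\lambda\ne 0$; they are therefore linear isomorphisms, so the two $\lambda$-eigenspaces have equal dimension. Because both matrices are $N\times N$, the multiplicity of the eigenvalue $0$ is then forced to agree as well, and the two spectra coincide with multiplicities.
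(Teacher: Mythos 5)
Your proof is correct, but it does considerably more than the paper's, and along the way it repairs a small gap in the paper's own argument. The paper's proof is precisely your opening ``easy half'': from $\M\M^*\vec v=\lambda\vec v$ it applies $\M^*$ to both sides and concludes that $\lambda$ is an eigenvalue of $\M^*\M$ with eigenvector $\M^*\vec v$ --- without checking that $\M^*\vec v\ne 0$. Your observation that $Bv\ne 0$ whenever $\lambda\ne 0$ is exactly the missing check; when $\lambda=0$ the paper's argument as written breaks down (the ``eigenvector'' $\M^*\vec v$ may vanish), though the conclusion survives because $\det(\M^*\M)=\det(\M^*)\det(\M)=\det(\M\M^*)$. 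Everything after your first paragraph is genuinely different from the paper: the block-similarity identity giving $\det(\lambda I-AB)=\det(\lambda I-BA)$, the perturbation argument with $A+\epsilon I$, and the Hermitian eigenspace-dimension argument all establish equality of the spectra \emph{with multiplicities}, whereas the paper states (and, for its purposes, only needs) equality of the eigenvalues as sets --- the lemma is invoked solely so that ${\bf A}={\bf \Gamma}{\bf \Gamma}^*$ and ${\bf B}={\bf \Gamma}^*{\bf \Gamma}$ have the same maximum and minimum eigenvalues, which serve as the optimal frame constants. So your route costs more machinery but buys a complete and strictly stronger statement, valid even at the degenerate eigenvalue $0$; the paper's route is a two-line intertwining argument that is fine for nonzero eigenvalues but silently skips that case.
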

 
 \begin{proof}
 Let  $\lambda$ be an eigenvalue of ${\  M} {  M^*}$ with eigenvector $\vec v$.  Thus,  ${  M} {  M^*}\vec v=\lambda \vec v$, and if we multiply this identity to  the  right by $  {  M^*} $, we obtain
 $( {  M^*}{  M} ){  M^*}\vec v=\lambda {  M^*}\vec v.$  Thus, $\lambda $ is an eigenvalue of ${\  M^*}{  M} $ with eigenvector ${   M^*}\vec v$.
 \end{proof}
 
 \begin{proof}[Proof of Theorem \ref{C-meas=0}]
 
For a given the multi-rectangle $Q=Q(\vec M_1,\, ...,\, \vec M_N)$ as in \eqref{defQ}, we let   $$Z=\{ (\vec\delta_1, ...,\vec\delta_N)\in (\R^d)^N\ : \ \det {\bf \Gamma}(\l \vec M_p,\, \vec \delta_j\r) =0\}. 
$$ By Theorem \ref{T1-N-interv}, $\B(\vec\delta_1, ...,\vec\delta_N) $ is  a Riesz basis of $L^2(Q)$ if and only if $(\vec\delta_1, ...,\vec\delta_N)\not\in Z$.
Consider the function
$\psi:(\C^d)^N\to\C$,
$ \psi (\vec\eta_1, ...,\vec\eta_N)=  \det{\bf   \Gamma}(\l \vec m_p,\, \vec \eta_j \r). $
Since $\psi$ is holomorphic,   by   \cite[ Corollary 10]{GR},  the set where $\psi\equiv 0$  has 
 zero Lebesgue measure  in $(\C^d)^N$. Thus,   $Z$ has zero Lebesgue measure in $\R^d$.
\end{proof}
 \subsection{Proof of Theorem \ref{T1-special-delta}}

\medskip
 Let $\s(\vec \delta)$  be as in \eqref{d2-def-S}.
 By Theorem \ref{T1-N-interv}, $\s(\vec \delta)$ is a Riesz basis of $L^2(Q)$ if and only if the matrix ${\bf \Gamma}=\{\gamma_{j,p}\}_{1\leq j,\,p\leq n}$, with   $ \gamma_{j,p}=  e^{2\pi i   \l \vec M_p,\,  (j-1)\vec \delta\r} $   is non-singular. 
Since $\gamma_{j,p}=  \left(e^{2\pi i \l \vec M_p,\,  \vec \delta\r}\right)^{j-1}$,   ${\bf \Gamma}$ is a {\it Vandermonde matrix}, i.e., a  matrix with the terms of a geometric progression in each row. 
We have 
  $$\det {\bf \Gamma}=\prod_{p< q} \left(e^{2\pi i \l \vec M_p,\,  \vec \delta\r}-e^{2\pi i \l \vec M_q,\,  \vec \delta\r}\right), 
$$
and  so ${\bf \Gamma}$ is non-singular  if and only if   $e^{2\pi i \l \vec M_p,\,  \vec \delta\r}-e^{2\pi i \l \vec M_q,\,  \vec \delta\r}\ne 0$ whenever $p\ne q$.  Equivalently, ${\bf \Gamma}$ is non-singular  if and only if $\l\vec M_q-\vec M_p,\,  \vec \delta\r\not\in\Z$ whenever $p\ne q$.

By Theorem \ref{T1-N-interv} and \eqref{def-beta}, the frame constants  of $\s(\vec \delta)$ are
  the minimum and maximum eigenvalue of   $ {\bf \Gamma^*\Gamma}={\bf   B}=\{\beta_{p,q}\}_{1\leq p,q\leq N}$, where 
  \begin{align*} \beta_{p,q}= &
  \sum_{j=1}^{N}  e^{ 2\pi i (j-1)\l \vec\delta ,\, \vec M_q-\vec M_p\r}=\frac{1-e^{ 2\pi i N\l \vec\delta ,\, \vec M_q-\vec M_p\r}}{1-e^{ 2\pi i  \l \vec\delta ,\, \vec M_q-\vec M_p\r}}
\\
= & e^{  \pi i (N-1)\l \vec\delta ,\, \vec M_q-\vec M_p\r} \frac{\sin(\pi N \l \vec\delta ,\, \vec M_q-\vec M_p\r)}{
\sin(\pi   \l \vec\delta ,\, \vec M_q-\vec M_p\r)}.
\end{align*}
Let   ${\bf \tilde B}$ be the matrix whose elements are $\tilde \beta_{p,q}=\frac{\sin(\pi N \l \vec\delta ,\, \vec M_q-\vec M_p\r)}{
\sin(\pi   \l \vec\delta ,\, \vec M_q-\vec M_p\r)}$ when $p\ne q$ and $\tilde \beta_{p,q}=N$ when $p=q$;  
we have  
$   \l {\bf B} \vec v, \ \vec v\r=   \l {\bf \tilde B} (v\odot \vec w),\ (\vec v \odot \vec w) \r $, 
where     $\vec w= (e^{  \pi i (N-1)\l \vec\delta ,\, \vec M_1\r} ,\ ...,\,\ e^{  \pi i (N-1)\l \vec\delta ,\, \vec M_N\r} )$ and $\odot$ denotes the Hadamard (componentwise) product.

Since $||\vec w||= ||\vec v||$, 
the Hermitian forms  $\vec v\to\l {\bf B} \vec v, \ \vec v\r$  and $\vec v\to    \l {\bf \tilde B} \vec v,\, \vec v\r$  have the same maximum and minimum on    $\S^{N-1}_{\C}$; consequently, 
 the optimal frame bounds of $\s(\vec\delta)$ are the maximum and minimum eigenvalues of  ${\bf \tilde B}$, as required.  $\Box$
  
  \medskip
  \noindent
  {\it Remark.} It is interesting to observe that 
 
\begin{align*} 
 |\det{\bf \Gamma}|^2 = \det {\bf B} = & \prod_{p< q} \left|e^{2\pi i \l \vec M_p,\,  \vec \delta\r}-e^{2\pi i \l \vec M_q,\,  \vec \delta\r}\right|^2\\ &
=2^{\frac{N(N-1)}{2}} \prod_{p< q}(1-\cos(2\pi\l\vec M_p-\vec M_q,\,  \vec \delta\r))\\ & = 2^{N(N-1)} \prod_{p< q} \sin^2(\pi\l\vec M_p-\vec M_q,\,  \vec \delta\r).
\end{align*}
 \section{Corollaries and examples}\label{S-Corollaries}

In this section we prove a number of corollaries of  Theorems \ref{T1-N-interv} and \ref{T1-special-delta}. We   use the following notation:  
 we denote with $\vec a \odot \vec b = a_1b_1+...+a_db_d$  the Hadamard product of  the vectors   $\vec a=(a_1,\, ...,\, a_d),\ \vec b=(b_1,\, ...,\,b_d)\in\C^d$; if
   $A$ is a nonempty subset of $\R^d$ and $\vec v\in \C^d$,  we let     
    $A\odot \vec v= \{  x\odot \vec v\ : \ x\in A\}$.

In this section we will  always assume, often without saying, that $N$ denotes  a positive integer.  We start with a corollary of Theorem \ref{T1-N-interv}.
 
  \begin{Cor}\label{L-[0,N]}
Let $I=[a, a+N] $, with $a\in\R$ and $N\ge 1 $.  Let $\delta_1 ,\, ...,\, \delta_N\in\R$ and  %
$\B=\B(\delta_1, \, ...,\,\delta_N)$  as in \eqref{d2-defofB}. Then, $\B$ 
is a Riesz basis on $L^2(I)$   if and only if  $\delta_i-\delta_j\not\in\Z$. The optimal frame constants  of $\B$  are     the maximum and minimum eigenvalues of the matrix ${\bf \tilde A} =\{
\tilde\alpha_{i,j}\}_{1\leq i,j\leq N}$, where  $$\tilde\alpha_{i,j}=\begin{cases}  \frac{\sin(\pi N(\delta_i-\delta_j))}{ \sin (\pi (\delta_i-\delta_j))} & \mbox{ if $i\ne j $ }, \cr N &\mbox{ if $i=j$}. \end{cases}$$
\end{Cor}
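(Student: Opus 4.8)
The plan is to reduce the statement to Theorem \ref{T1-N-interv} after placing $I$ in the standard position of \eqref{defQ}. Since multiplying an exponential by a unimodular constant alters neither the frame inequalities \eqref{e2-frame} nor the Riesz-sequence inequalities \eqref{e2- Riesz-sequence}, the map $f(x)\mapsto f(x+c)$ is a unitary of $L^2(I)$ onto $L^2(I-c)$ that preserves the moduli of all the coefficients $\l f,\ e^{2\pi i(n+\delta_j)x}\r$; hence both the Riesz-basis property of $\B$ and its optimal frame constants are invariant under translating $I$. Taking $c=a+\frac12$ and discarding the single endpoint (a null set in $L^2$), I may assume $I=[-\frac12,\,N-\frac12)=\bigcup_{p=1}^{N}\big(Q_0+(p-1)\big)$, that is $I=Q(M_1,\dots,M_N)$ with $M_p=p-1\in\Z$. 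Theorem \ref{T1-N-interv} then applies verbatim.

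Next I would identify the matrix ${\bf\Gamma}$. With $M_p=p-1$ its entries are $\gamma_{j,p}=e^{2\pi i\delta_j(p-1)}=\big(e^{2\pi i\delta_j}\big)^{p-1}$, so ${\bf\Gamma}$ is a \emph{Vandermonde matrix} in the nodes $x_j=e^{2\pi i\delta_j}$ and $\det{\bf\Gamma}=\prod_{j<k}(x_k-x_j)$. This determinant is nonzero precisely when the $x_j$ are pairwise distinct, i.e. when $e^{2\pi i\delta_i}\ne e^{2\pi i\delta_j}$ for $i\ne j$, which is equivalent to $\delta_i-\delta_j\notin\Z$. By Theorem \ref{T1-N-interv} this is exactly the condition for $\B$ to be a Riesz basis of $L^2(I)$.

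For the frame constants, Theorem \ref{T1-N-interv} gives that they are the extreme eigenvalues of ${\bf B}={\bf\Gamma^*}{\bf\Gamma}$, equivalently, by Lemma \ref{L2-eigen-TT*}, of $\A={\bf\Gamma}{\bf\Gamma^*}$, whose entries are, by \eqref{def-alpha}, $\alpha_{i,j}=\sum_{p=1}^{N}e^{2\pi i(\delta_i-\delta_j)(p-1)}$. Summing this geometric series exactly as in the proof of Theorem \ref{T1-special-delta} (for $i\ne j$ with $\delta_i-\delta_j\notin\Z$) yields $\alpha_{i,j}=e^{\pi i(N-1)(\delta_i-\delta_j)}\,\tilde\alpha_{i,j}$, while $\alpha_{i,i}=N=\tilde\alpha_{i,i}$. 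I would then write $\A=D\,{\bf\tilde A}\,D^*$, where $D=\operatorname{diag}\big(e^{\pi i(N-1)\delta_1},\dots,e^{\pi i(N-1)\delta_N}\big)$ is unitary; since unitary conjugation preserves eigenvalues and ${\bf\tilde A}$ is real symmetric, the extreme eigenvalues of $\A$ and of ${\bf\tilde A}$ coincide, giving the stated frame constants.

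The only step that is not an immediate quotation of Theorem \ref{T1-N-interv} is the initial normalization, and the one modest point to verify there is that translating the interval leaves the optimal constants unchanged. Everything afterward is a Vandermonde determinant together with a geometric-sum computation already performed, in transposed form, in the proof of Theorem \ref{T1-special-delta}, so I expect no genuine obstacle beyond this bookkeeping.
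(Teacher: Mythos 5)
Your proposal is correct and follows essentially the same route as the paper: normalize $I$ to $[-\frac12,\,N-\frac12)$, apply Theorem \ref{T1-N-interv}, recognize ${\bf\Gamma}$ as a Vandermonde matrix to get the condition $\delta_i-\delta_j\notin\Z$, and sum the geometric series for $\alpha_{i,j}$. Your conjugation $\A=D\,{\bf\tilde A}\,D^*$ by the diagonal unitary $D$ is just a cleaner restatement of the paper's Hadamard-product argument (borrowed from the proof of Theorem \ref{T1-special-delta}) for discarding the unimodular phases, and your explicit justification of the translation-invariance step fills in a detail the paper leaves as ``without loss of generality.''
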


\begin{proof}
Without loss of generality, we can let  
 $I= [-\frac 12, N-\frac 12)=   \cup_{p=0} ^{N-1} [p-\frac 12, p+\frac 12)$; by Theorem \ref{T1-N-interv},   $\B(\delta_1, \, ...,\,\delta_N)$ is a Riesz basis of $L^2(I)$ if and only if  the matrix ${\bf \Gamma}=\{ e^{2\pi i p \delta_j}\}_{0\leq j,p\leq N-1}$ is nonsingular.  But $\ e^{2\pi i p \delta_j}=\left(e^{2\pi i  \delta_j} \right)^{p}$, and so ${\bf \Gamma}  $ is a Vandermonde matrix whose  determinant is $ \det{\bf\Gamma}= \prod_{0\leq i<j=N-1} (e^{ 2\pi i  \delta_i}-e^{ 2\pi i  \delta_j})$. Therefore, $ \det{\bf\Gamma}\ne 0\iff \delta_i-\delta_j\not\in\Z.
 $ 
In view of \eqref{def-alpha}, the  optimal  frame constant of ${\cal B}$ are the maximum and minimum eigenvalue of the matrix $\A=\{\alpha_{i,j}\}_{1\leq i,j\leq N}$, with $ \alpha_{i,j}= \sum_{p=0}^{N-1}e^{2\pi i( \delta_i-  \delta_j) p } $.
We can easily verify that $\alpha_{i,j}=e^{ \pi i(N-1)( \delta_i-  \delta_j)  }\frac{\sin( \pi N( \delta_i-  \delta_j) ) }{\sin( \pi ( \delta_i-  \delta_j))  }$ when $i\ne j$, and $\alpha_{i,j}=N$ when $i=j$; if we argue as in the second part of the proof of Theorem \ref{T1-special-delta}, we can conclude that the frame constants of ${\cal B} $ are  the maximum and minimum eigenvalue of the matrix ${\bf \tilde  A}$ defined above.

\end{proof}

 \noindent
 {\it Remark}. In view of \cite[Lemma 2.1]{SZ} (see Section 2.2)  it is not too difficult to prove a multi-dimensional version of Corollary \ref{L-[0,N]}. We leave the details to the interested reader.
 
 \subsection{Multi-rectangles with vertices in  $\Q^d$ and a stability theorem}
 
 Let $  R$ be a multi-rectangle  with  vertices in $\Q^d$. 
Let  $ l_1, ...,l_d $ be  the smallest positive integers for which   $\tilde R= R\odot  (l_1,...,l_d) $  has vertices in $\Z^d$. Let $\vec l= (l_1, \, ...,\, l_d)$ and $L=  \prod_{j=1}^N l_j$.
The multi-rectangle  $\tilde R$   is a  union of $N=L|R|$ disjoint unit cubes with vertices in $\Z^d$.  Let $\vec\delta_1,\, ...,\, \vec\delta_N\in\R^d$ and $ \B(\vec\delta_1,\, ...,\, \vec\delta_N) $ as in \eqref{d2-defofB}.
 A  simple scaling   in   \eqref{e2-cond-span} and \eqref{e2-cond-lin-ind}  proves the following 
 \begin{Cor}\label{Cor-rational-cubes}
  $  \B(\vec\delta_1,\, ...,\, \vec\delta_N)$ is a Riesz basis of $L^2(\tilde R)$ with constant $A$ and $B$ if and only if  $ \tilde\B=\bigcup_{j=1}^N \{ e^{2\pi i  \l  (\vec n+\vec\delta_j)\odot \vec l,\   x\r }\}_{n\in\Z^d}$ 
 is a Riesz basis of $L^2(R)$ with constants $ \frac A L $ and $ \frac B L $.   

 \end{Cor}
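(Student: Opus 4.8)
The plan is to prove the corollary by the change of variables that carries $R$ onto $\tilde R$, tracking how the two defining inequalities \eqref{e2-cond-span} and \eqref{e2-cond-lin-ind} rescale. Let $\Phi\colon \R^d\to\R^d$ be the invertible linear map $\Phi(x)= x\odot \vec l=(l_1x_1,\dots,l_dx_d)$, so that $\tilde R=\Phi(R)$ and $\Phi$ has constant Jacobian $L=\prod_{j=1}^d l_j$. To each $f\in L^2(\tilde R)$ I associate $g=f\circ\Phi\in L^2(R)$; since $\Phi$ is a bijection, $f\mapsto g$ is a bijective correspondence between $L^2(\tilde R)$ and $L^2(R)$, and the change of variables $y=x\odot\vec l$ gives $||f||^2_{L^2(\tilde R)}=L\,||g||^2_{L^2(R)}$.

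Next I record how the exponentials transform under $\Phi$. The elementary identity $\l \vec n+\vec\delta_j,\ x\odot\vec l\r=\l (\vec n+\vec\delta_j)\odot\vec l,\ x\r$ shows that the pull-back of $e^{2\pi i\l \vec n+\vec\delta_j,\,y\r}$ along $\Phi$ is exactly the exponential $e^{2\pi i\l (\vec n+\vec\delta_j)\odot\vec l,\,x\r}$ appearing in $\tilde\B$. Applying the substitution $y=x\odot\vec l$ to the frame inner products yields
$$\l f,\ e^{2\pi i\l \vec n+\vec\delta_j,\,y\r}\r_{L^2(\tilde R)}=L\,\l g,\ e^{2\pi i\l (\vec n+\vec\delta_j)\odot\vec l,\,x\r}\r_{L^2(R)},$$
while the same computation applied to a finite linear combination $\sum_{j,\vec n} a_{j,\vec n}e^{2\pi i\l \vec n+\vec\delta_j,\,y\r}$ shows that its squared $L^2(\tilde R)$-norm equals $L$ times the squared $L^2(R)$-norm of the corresponding combination $\sum_{j,\vec n} a_{j,\vec n}e^{2\pi i\l (\vec n+\vec\delta_j)\odot\vec l,\,x\r}$ of exponentials from $\tilde\B$.

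Finally I substitute these relations into the two characterizing inequalities. Inserting $||f||^2_{L^2(\tilde R)}=L\,||g||^2_{L^2(R)}$ together with the identity $\sum_{j,\vec n}|\l f,\cdot\r_{L^2(\tilde R)}|^2=L^2\sum_{j,\vec n}|\l g,\cdot\r_{L^2(R)}|^2$ (which follows by squaring the inner-product relation above and summing) into \eqref{e2-cond-span} and dividing through by $L^2$ turns the frame inequality for $\B$ on $L^2(\tilde R)$ with bounds $A,B$ into the frame inequality for $\tilde\B$ on $L^2(R)$ with bounds $\frac A L,\frac B L$. Likewise, inserting the norm identity for finite sums into \eqref{e2-cond-lin-ind} and dividing by $L$ turns the Riesz-sequence inequality for $\B$ with bounds $A,B$ into the same inequality for $\tilde\B$ with bounds $\frac A L,\frac B L$. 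Because $f\mapsto g$ is a bijection and the coefficient families $\{a_{j,\vec n}\}$ are left untouched, each inequality holds for all admissible inputs if and only if its rescaled counterpart does, which gives the claimed equivalence.

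This argument is entirely routine; the only point requiring care is the bookkeeping of the Jacobian $L$, which enters \eqref{e2-cond-span} through the square $L^2$ of the inner-product scaling but enters \eqref{e2-cond-lin-ind} only linearly. In both cases the normalization is arranged so that the constants transform identically, from $A,B$ to $\frac A L,\frac B L$; checking that these two different powers of $L$ produce the same rescaled bounds is the one step I would verify explicitly rather than leave to the reader.
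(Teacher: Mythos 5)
Your proof is correct and is precisely the ``simple scaling'' argument the paper invokes; the paper offers no further detail, asserting only that a scaling in \eqref{e2-cond-span} and \eqref{e2-cond-lin-ind} proves the corollary. Your explicit bookkeeping --- that the Jacobian $L$ enters the frame inequality quadratically through the inner products but only linearly through the norms, while it enters the Riesz-sequence inequality linearly with the coefficients untouched, and that both normalizations nevertheless yield the same rescaled constants $\frac{A}{L}$, $\frac{B}{L}$ --- is exactly the verification the paper leaves to the reader.
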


\medskip
Our next result is a  stability theorem for the basis $E(\Z)$ on $L^2(0,1)$:
\begin{Cor}\label{C-Kadec}
Let  ${\cal E}=\{\epsilon_j\}_{j\in\Z}\subset  \R$; assume that, for some integer  $N\ge 2$, we have that  
for which  \begin{equation}\label{e5-cond-Kadec1}
 \epsilon_j = \epsilon_{N+j},\ \quad j\in\Z  
\end{equation}
The set  ${\cal U}=\{e^{2\pi i(n+\epsilon_n)x}\}_{n\in\Z}$ is a Riesz basis of $L^2(0,1)$ if and only if
\begin{equation}\label{e5-cond-Kadec}
\frac{\epsilon_i -\epsilon_j +i-j}{N}\not\in\Z\ \mbox{\ whenever \ \  $  i\ne j $.} 
\end{equation}
In particular, ${\cal U}$ is a Riesz basis of $L^2(0,1)$  whenever $ \epsilon_i -\epsilon_j\not\in\Z$.
\end{Cor}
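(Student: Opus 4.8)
The idea is to reduce Corollary \ref{C-Kadec} to Theorem \ref{T1-special-delta} applied to a suitable multi-interval built from the periodicity hypothesis \eqref{e5-cond-Kadec1}. First I would observe that the set $\mathcal{U}=\{e^{2\pi i(n+\epsilon_n)x}\}_{n\in\Z}$ on $L^2(0,1)$ should be rescaled: since the exponents $n+\epsilon_n$ are $N$-periodic in the sense that $\epsilon_j=\epsilon_{N+j}$, it is natural to split the index $n\in\Z$ as $n=Nm+r$ with $0\le r\le N-1$ and $m\in\Z$, so that $\epsilon_n=\epsilon_r$ depends only on the residue $r$. Rewriting $n+\epsilon_n = N\bigl(m+\tfrac{r+\epsilon_r}{N}\bigr)$ exhibits $\mathcal{U}$ as a union over $r$ of lattice-translated exponential families with frequency spacing $N$, which matches the scaled form in Corollary \ref{Cor-rational-cubes}.

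\textbf{Key steps.} Concretely, I would apply the dilation from Corollary \ref{Cor-rational-cubes} with $d=1$ and scaling factor $l=N$: the interval $(0,1)$ corresponds, after scaling frequencies by $N$, to the interval $I=(0,N)$, which is a union of $N$ unit cubes $Q_0+\vec M_p$ with $\vec M_p=p-\tfrac12$ (equivalently $M_p=p$ for $p=0,\dots,N-1$ after an harmless translation). Under this correspondence $\mathcal{U}$ becomes a set of the form $\mathcal{B}(\delta_1,\dots,\delta_N)$ with shifts $\delta_{r+1}=\tfrac{r+\epsilon_r}{N}$, $r=0,\dots,N-1$. By Theorem \ref{T1-N-interv} (or directly by the Vandermonde computation in Corollary \ref{L-[0,N]}), this is a Riesz basis if and only if $\delta_i-\delta_j\notin\Z$ for $i\ne j$. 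Translating this back through $\delta_i-\delta_j=\tfrac{(i-1)+\epsilon_{i-1}-(j-1)-\epsilon_{j-1}}{N}$ gives precisely the condition $\tfrac{\epsilon_i-\epsilon_j+i-j}{N}\notin\Z$ in \eqref{e5-cond-Kadec}, after relabelling indices. The final ``in particular'' clause follows because if $\epsilon_i-\epsilon_j\notin\Z$ then, combined with $i-j\in\Z$, the numerator $\epsilon_i-\epsilon_j+i-j$ is never an integer multiple of $N$ (indeed it is never an integer at all), so \eqref{e5-cond-Kadec} holds automatically.

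\textbf{Main obstacle.} The genuinely delicate point is the bookkeeping of the scaling and index identification: one must verify that the map sending $\mathcal{U}$ on $(0,1)$ to a family $\mathcal{B}(\delta_1,\dots,\delta_N)$ on a union of $N$ unit cubes is exactly the dilation of Corollary \ref{Cor-rational-cubes}, so that the Riesz-basis property transfers (with the frame constants merely rescaled by $N$, which does not affect whether $\mathcal{U}$ is a basis). In particular I must check that the periodicity \eqref{e5-cond-Kadec1} is precisely what guarantees that, after grouping $n=Nm+r$, each residue class $r$ contributes one full lattice family $\{e^{2\pi i(m+\delta_{r+1})Nx}\}_{m\in\Z}$ and that these $N$ families together exhaust $\mathcal{U}$ with no repetition or omission. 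Once this identification is pinned down, the rest is the routine Vandermonde nonsingularity criterion already established, and the equivalence with \eqref{e5-cond-Kadec} is immediate.
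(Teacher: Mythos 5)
Your proposal is correct and follows essentially the same route as the paper's own proof: decompose $\Z$ into residue classes mod $N$ using the periodicity \eqref{e5-cond-Kadec1}, rescale via Corollary \ref{Cor-rational-cubes} to pass from $L^2(0,1)$ to $L^2(0,N)$, and apply the Vandermonde criterion of Corollary \ref{L-[0,N]} with $\delta_j=\frac{j+\epsilon_j}{N}$. The only differences are cosmetic: you spell out the index bookkeeping and the ``in particular'' clause, which the paper leaves implicit.
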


Kadec theorem implies that  a set $E(\Lambda)=\{e^{2\pi i \lambda_n x}\}_{n\in\Z}$  is a Riesz basis  of $L^2(0,1)$ whenever   $\ell=\sup_{j\in\Z} |j-\lambda_j|<  \frac 14$.   When    $\ell\ge   \frac 14$,   $E(\Lambda)$ may  not be  a Riesz basis  of  $L^2(-\frac 12, \frac 12)$,  but  that is the case if the $\epsilon_j=\lambda_j-j$ satisfy  \eqref{e5-cond-Kadec} and 
 \eqref{e5-cond-Kadec1}.

 
 \medskip
 
 We prove first the prove the  following

\begin{proof}[Proof  of Corollary \ref{C-Kadec}]
  
By  \eqref{e5-cond-Kadec1}, $\epsilon_k=\epsilon_m$  whenever   $m=nN+k $    for some $n\in\Z$.
Thus, \begin{align*}
{\cal U}=   \{ e^{2\pi i (m+\epsilon_m) x}\}_{m\in\Z} 
=  
\bigcup_{j=0}^{N-1} \{ e^{2\pi i (N n+j  +\epsilon_{j } ) x}\}_{n\in\Z}  =\bigcup_{j=0}^{N-1} \{ e^{2\pi i N(n+\frac{j+\epsilon_{j}}{N} ) x}\}_{n\in\Z}. \end{align*}
By Corollary \ref{Cor-rational-cubes},    ${\cal U}$
is a Riesz basis of $L^2(0,1)$ if and only if  the set $\tilde {\cal U}=\bigcup_{j=0}^{N-1} \{ e^{2\pi i  (n+\frac{j+\epsilon_{j }}{N} ) x}\}_{n\in\Z} $ is a Riesz basis of $L^2(0,N)$. We can apply Corollary \ref{L-[0,N]},  with $\delta_j=\frac{j+\epsilon_{j }}{N}$, and conclude that $\delta_j-\delta_j\not\in\Z$ is equivalent to \eqref{e5-cond-Kadec}.  
\end{proof}
 
 \medskip
 \noindent
 {\it Example 1.} Fix   $s\in (0,1)$, and define $ {\cal U}=\{ e^{2\pi i  ( m+ \mu_m ) x}\} _{m\in\Z}$
 where $\mu_m=s$ when $m$ is even and $\mu_m=-s$ when $m$ is odd. 
   By 
  \eqref{e5-cond-Kadec},   $ {\cal U}$ is a Riesz basis of $L^2(0,1)$ if and only if $\frac{ 2s-1}{2}\not\in\Z$, or 
 $0<s<\frac 12$.   
 
\medskip
 It is interesting to compare Example 1  with a famous example 
 by Ingham. In  \cite[p. 378]{I}  it is proved   that  ${\cal U} $  is not a Riesz basis of $L^2(0,1)$ when $\mu_m=\frac 14$ if $m>0$, $\mu_m =-\frac 14$ if $m<0$ and $\mu_0=0$.  Ingham's example shows that the constant $\frac 14$ in Kadec's theorem cannot be replaced by any larger constant. See also \cite{Y}.
 
 \medskip
 \noindent

 \subsection{Two cubes in $\R^d$}
  
  Let  $\vec M_1\ne \vec M_2\in\Z^d$ and let $Q = \tau_{\vec M_1}Q_0\cup \tau_{\vec M_2}   Q_0   $.     Let $\vec\delta_1,\ \vec\delta_2\in\R^d$.  
  We prove the following 
 
\begin{Cor}\label{C4-2 cubes} a) The set $\B=\B(\delta_1, \,\delta_2)$ is a Riesz basis of $L^2(Q )$ if and only if   $\l\vec M_1-\vec M_2, \,  \vec \delta_1-\vec\delta_2\r\not\in\Z$. The optimal frame constants of $\B$ are 
$$A=2(1-|\cos(\pi \l\vec M_1-\vec M_2,\,  \vec \delta_1-\vec\delta_2\r)  |),\  B=2(1+|\cos(\pi \l\vec M_1-\vec M_2,\,    \vec \delta_1-\vec\delta_2\r)  |).$$

 In particular, $\B $ is  an orthogonal Riesz basis of $L^2(Q)$  if and only if   $$ 2 \l \vec M_1-\vec M_2,\    \vec \delta_1-\vec\delta_2\r \in\Z,\ \mbox{ and}\  \l \vec M_1-\vec M_2,\,   \vec \delta_1-\vec\delta_2\r \not\in\Z.
$$
\end{Cor}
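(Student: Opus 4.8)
The plan is to specialize Theorem \ref{T1-N-interv} to the case $N=2$ and then carry out the resulting $2\times 2$ linear algebra explicitly. Writing $\theta = \l \vec M_1-\vec M_2,\ \vec\delta_1-\vec\delta_2\r$, the matrix ${\bf \Gamma}$ from \eqref{defGamma} is the $2\times 2$ matrix with entries $e^{2\pi i \l \vec\delta_j,\,\vec M_p\r}$. First I would compute its determinant: factoring the common phase out of the two product terms gives $\det{\bf \Gamma}=e^{2\pi i b}(e^{2\pi i \theta}-1)$ for a suitable real $b$, so that $\det{\bf \Gamma}\ne 0$ exactly when $\theta\notin\Z$. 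By Theorem \ref{T1-N-interv} this establishes the Riesz basis criterion of part a).

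For the optimal frame constants I would use \eqref{def-beta} to write down ${\bf B}={\bf \Gamma^*\Gamma}$ directly rather than multiplying matrices by hand: its diagonal entries are $\beta_{1,1}=\beta_{2,2}=2$, its off-diagonal entry is $\beta_{1,2}=e^{2\pi i \l\vec\delta_1,\,\vec M_2-\vec M_1\r}+e^{2\pi i \l\vec\delta_2,\,\vec M_2-\vec M_1\r}$, and $\beta_{2,1}=\overline{\beta_{1,2}}$. The eigenvalues of a $2\times 2$ Hermitian matrix with equal diagonal entries $2$ and off-diagonal entry $\beta_{1,2}$ are $2\pm|\beta_{1,2}|$. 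A short computation using $|e^{2\pi i u}+e^{2\pi i v}|^2=2+2\cos(2\pi(u-v))$ gives $|\beta_{1,2}|^2=2+2\cos(2\pi\theta)=4\cos^2(\pi\theta)$, hence $|\beta_{1,2}|=2|\cos(\pi\theta)|$. Since Theorem \ref{T1-N-interv} identifies the optimal frame constants with the maximum and minimum eigenvalue of ${\bf B}$, this yields $A=2(1-|\cos(\pi\theta)|)$ and $B=2(1+|\cos(\pi\theta)|)$, as claimed.

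For the orthogonality statement I would invoke the remark following Lemma \ref{L-normalized-tight}: an exponential basis of $L^2(Q)$ is orthogonal precisely when it is a tight frame with constant $|Q|=2$. Tightness means $A=B$, i.e. $|\cos(\pi\theta)|=0$, in which case the common value of $A$ and $B$ is automatically $2=|Q|$. The condition $\cos(\pi\theta)=0$ says that $\theta$ is a half-integer that is not an integer, i.e. $2\theta\in\Z$ while $\theta\notin\Z$, which is exactly the stated condition (and in particular forces $\det{\bf \Gamma}\ne 0$, so $\B$ is indeed a Riesz basis).

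I do not anticipate a serious obstacle, since everything reduces to the master theorem; the only places that demand care are the trigonometric simplification of $|\beta_{1,2}|$ and the bookkeeping in the orthogonality case, where one must verify that the tight-frame constant produced by $A=B$ genuinely equals the measure $|Q|=2$ before applying the characterization of orthogonal exponential bases. $\Box$
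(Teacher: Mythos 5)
Your proposal is correct and takes essentially the same approach as the paper: both specialize Theorem \ref{T1-N-interv} to $N=2$, obtain the optimal frame constants $2\bigl(1\pm|\cos(\pi \l\vec M_1-\vec M_2,\, \vec\delta_1-\vec\delta_2\r)|\bigr)$ as the eigenvalues of a $2\times 2$ Hermitian matrix, and settle the orthogonality claim via the tight-frame characterization following Lemma \ref{L-normalized-tight}. The only immaterial difference is that you work with ${\bf B}={\bf \Gamma}^*{\bf \Gamma}$ and compute $\det{\bf \Gamma}$ directly, whereas the paper first translates so that $\vec M_1=\vec 0$ and diagonalizes ${\bf A}={\bf \Gamma}{\bf \Gamma}^*$, which has the same eigenvalues.
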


 \begin{proof}    
After perhaps a translation, we can let   $Q=Q_0\cup+ \tau_{\vec M}Q_0 $, with $\vec M=\vec M_2-\vec M_1$.  
By Theorem \ref{T1-N-interv},  $\B$ is a Riesz basis if and only if the matrix 
$$
{\bf A}=\left(\begin{matrix}  2, & 1+e^{2\pi i \l \vec M,\,\vec \delta_1-\vec\delta_2\r }
\\ 
1+ e^{-2\pi i \l \vec M,\,\vec \delta_1-\vec\delta_2\r },  & 2 
\end{matrix}\right)
$$
is nonsingular.  The eigenvalues of $\A$ are  the zeros of the characteristic polynomial,
$$
\mbox{det}({\bf A}-s {\bf I})=(2-s)^2-\left|1+e^{2\pi i \l \vec M,\,  \vec \delta_1-\vec\delta_2\r }\right|^2  
$$
where ${\bf I}=\left(\begin{matrix}  1  & 0\\ 0  & 1\end{matrix}\right)$. 
We can easily verify that
$$
\mbox{det}({\bf A}-s {\bf I}) = 
 s^2-4s +4\sin^2 (\pi \l\vec M,\,  \vec \delta_1-\vec\delta_2\r )=0 \iff
$$
$s= 2(1\pm |\cos(\pi \l\vec M\,  \vec \delta_1-\vec\delta_2\r )|) $. Thus, $\lambda=2(1- |\cos(\pi \l\vec M\,  \vec \delta_1-\vec\delta_2\r )|)$ and $\Lambda=2(1+ |\cos(\pi \l\vec M\,  \vec \delta_1-\vec\delta_2\r )|)$  are the optimal frame constants of $\B$.

    \medskip
  When $\cos(\pi \l\vec M\,  \vec \delta_1-\vec\delta_2\r)=0$, i.e. when  $\l\vec M,\, \vec \delta_1-\vec\delta_2 \r$ is an odd multiple of $\frac 12$,  
  $\B$ is a tight frame with constants   $\lambda=\Lambda=2$.
  Since the functions in $\B$ have norm $=2$ on $L^2(Q)$,   Lemma \ref{L-normalized-tight} implies that $\B$ is orthogonal.  

\end{proof}

  \noindent
  {\it Remark.}  
Let $Q$ be 
the union  of two disjoint unit cubes with vertices in $\Z^d$. We can verify that $Q$ tiles $\R^d$ by translation;   by
  Corollary \ref{C4-2 cubes}, we can always find an orthogonal basis on $L^2(Q)$ and so  $Q$ is a spectral domain of  $\R^d$.
It is  proved in  \cite{La}  that     the union of two  disjoint intervals of nonzero length  is   spectral   if and only it   tiles  $\R$ by translation. To the best of our knowledge,   the  analog of the main theorem in \cite{La}  has not been proved  (or disproved) for  unions of two disjoint  rectangles in $\R^d$.

\subsection{Spectral domains in $\R^d$}
In this section we show examples of  spectral  multi-rectangles  in $\R^d$.   We let   $Q =Q(\vec M_1,\, ...,\, \vec M_N)\subset \R^d$ be as in \eqref{defQ}  and $ \s(\vec\delta)   =\bigcup_{j=1}^{N-1}\{e^{2\pi i  \l \vec n+(j-1)\vec \delta,\, x\r }\}_{\vec n\in\Z^d} 
 $ be as in \eqref{d2-def-S}.   The following Corollary can be viewed as a generalization of Corollary \ref{C4-2 cubes}.

 \begin{Cor}\label{Cor-orthonormal}
  The set  $\s(\vec\delta)$   is an orthogonal basis of $L^2(Q) $ 
if and only if, for every $p\ne q$,   
\begin{equation}\label{e-cor-orthon-N} \l\vec M_p-\vec M_q,\,  \vec \delta\r\not\in\Z\  \mbox{ and }\  N\l\vec M_p-\vec M_q,\,  \vec \delta\r\in\Z.
\end{equation}  
 
 \end{Cor}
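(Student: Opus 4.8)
The plan is to read the corollary off Theorem~\ref{T1-special-delta} via the orthogonality criterion recorded after Lemma~\ref{L-normalized-tight}. Since $Q=Q(\vec M_1,\dots,\vec M_N)$ is a disjoint union of $N$ translates of the unit cube $Q_0$, we have $|Q|=N$; the remark following Lemma~\ref{L-normalized-tight} therefore says that $\s(\vec\delta)$ is an orthogonal basis of $L^2(Q)$ exactly when it is a tight frame with frame constant $N$. By Theorem~\ref{T1-special-delta} the optimal frame bounds of $\s(\vec\delta)$ are the smallest and largest eigenvalues of the matrix ${\bf \tilde B}=\{\tilde\beta_{p,q}\}$ of \eqref{def-betatilde-Thm13}, so everything reduces to deciding when both extreme eigenvalues of ${\bf \tilde B}$ equal $N$.

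First I would record the structural features of ${\bf \tilde B}$. Because $t\mapsto \sin(\pi N t)/\sin(\pi t)$ is even, $\tilde\beta_{p,q}=\tilde\beta_{q,p}$, so ${\bf \tilde B}$ is real and symmetric with real eigenvalues; its diagonal entries are all $N$, hence $\operatorname{tr}{\bf \tilde B}=N^2$. Being a tight frame forces the two extreme eigenvalues to coincide, hence all eigenvalues to be equal, and since they average to $\operatorname{tr}{\bf \tilde B}/N=N$ their common value must be $N$; equivalently ${\bf \tilde B}=N\,{\bf I}$, where ${\bf I}$ is the $N\times N$ identity. Conversely ${\bf \tilde B}=N\,{\bf I}$ obviously gives both frame bounds equal to $N$. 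Thus $\s(\vec\delta)$ is an orthogonal basis if and only if ${\bf \tilde B}=N\,{\bf I}$.

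Next I would translate ${\bf \tilde B}=N\,{\bf I}$ into arithmetic. As the diagonal entries already equal $N$, the identity holds precisely when every off-diagonal entry vanishes, i.e. $\tilde\beta_{p,q}=0$ for all $p\ne q$. Writing $t_{p,q}=\l\vec\delta,\ \vec M_q-\vec M_p\r$, the ratio $\sin(\pi N t_{p,q})/\sin(\pi t_{p,q})$ equals zero exactly when the numerator vanishes while the denominator does not, that is when $N t_{p,q}\in\Z$ and $t_{p,q}\notin\Z$; these two requirements are precisely \eqref{e-cor-orthon-N}.

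The point that must be handled carefully is the well-definedness of $\tilde\beta_{p,q}$: the quotient $\sin(\pi N t)/\sin(\pi t)$ may not be interpreted as the indeterminate $0/0$, and this is exactly why both clauses of \eqref{e-cor-orthon-N} are needed. The condition $t_{p,q}\notin\Z$ keeps the denominator nonzero and, by Theorem~\ref{T1-special-delta}, is simultaneously the condition guaranteeing that $\s(\vec\delta)$ is a Riesz basis to begin with; the condition $N t_{p,q}\in\Z$ then annihilates the numerator. No separate completeness verification is needed, and assembling the equivalences above---orthogonal basis $\iff$ tight frame with constant $N$ $\iff$ ${\bf \tilde B}=N\,{\bf I}$ $\iff$ \eqref{e-cor-orthon-N}---completes the proof.
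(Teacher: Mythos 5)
Your proof is correct and follows essentially the same route as the paper: both reduce orthogonality to being a tight frame with constant $N=|Q|$ via the remark after Lemma~\ref{L-normalized-tight}, then use Theorem~\ref{T1-special-delta} to show this is equivalent to ${\bf \tilde B}=N\,{\bf I}$, i.e.\ to the vanishing of the off-diagonal entries $\tilde\beta_{p,q}$, which is exactly \eqref{e-cor-orthon-N}. Your trace argument pinning the common eigenvalue at $N$ is a small (and sound) variation on the paper's observation that the extreme eigenvalues both equal $N$, but it does not change the substance of the argument.
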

 \begin{proof}  Assume that \eqref{e-cor-orthon-N} holds.  By Theorem \ref{T1-special-delta}, the first condition in \eqref{e-cor-orthon-N} yields that  $\s(\vec \delta)$ is a Riesz basis of $L^2(Q)$; the second condition  implies  
   the matrix ${\bf \tilde B}$ as in \eqref{def-betatilde-Thm13}
 is diagonal. Thus,     $\s(\vec \delta)$ is a   tight frame with frame constant $N=|Q|$, and by  
   Lemma \ref{L-normalized-tight},  it  is an orthogonal basis of $L^2(Q)$.
  
 Conversely, assume that $\s(\vec \delta)$  is orthogonal.  Thus, the frame constant of $\s(\vec \delta)$ equal $A=B=N$ and  by Theorem \ref{T1-special-delta}, the maximum and minimum   eigenvalues  of the matrix ${\bf \tilde B}$ equal $N$ as well.   Since    $\C^N$ has a basis of eigenvectors of  ${\bf \tilde B}$, we can  infer that  ${\bf \tilde B}\vec v =N\vec v$ for every $\vec v\in\C^N$ and that 
  ${\bf \tilde B}$ is diagonal. Recalling that the elements of ${\bf \tilde B}$ are as in   \eqref{def-betatilde-Thm13}, we deduce \eqref{e-cor-orthon-N}.

 \end{proof}

 \noindent
 {\it Example 2.} Let $Q=Q(\vec M_1,\, ...,\, \vec M_N) $,   with  $N\leq d$. Assume that $Q_N=Q_0$ (so that $\vec M_N=(0,\, ...,\,0)$)  and that  the  $\vec M_1,\, ...,\, \vec M_{N-1}$'are linearly independent.  We show that  $ Q $ is  spectral.

Let 
 ${\bf M}$ be the matrix whose rows are $\vec M_1$,\, ...,\, $\vec M_{N-1}$.  By assumption,    ${\bf M}$ has rank $N-1$, and so we can find  $\vec \sigma \in\R^d $  that satisfies 
$ 
 \l\vec M_j ,\,  \vec \sigma\r= \frac jN $ for every $ j=1, ...,\, N-1.
 $ 
By  Corollary \ref{Cor-orthonormal},   $\s(\vec\sigma)$ is an orthogonal basis of $L^2(Q)$.

 \subsection{Extracting Riesz bases from frames}

 Let  $Q=Q(\vec M_1,\, ...,\, \vec M_N) $ be as in \eqref{defQ}.  
 Without loss of generality, we can assume   $Q\subset   [-\frac 12 ,\  T-\frac 12)^d $ for some $T>0$. From  \cite[Theorem 2]{KN2}  follows that a basis of $L^2(Q)$   can be extracted from $E ((\mbox{$\frac 1T$} \Z)^d)$, which is an orthogonal basis of 
  $[0,T)^d$  and  an exponential frame of $L^2(Q)$.  When $T$ is an integer, it is easy to verify that  
   $E ((\mbox{$\frac 1T$} \Z)^d)\supset \s ( \mbox{$\vec{\frac 1T}$})$, where $\s(\vec\delta)$  as in 
  \eqref{d2-def-S}, and    $\vec a=(a,\, ...,\, a)$ when  $a\in\R$. Indeed
  \begin{align*} E ((T^{-1} \Z)^d)  & = \{e^{\frac{2\pi i}{T} (n_1  x_1+...+n_dx_d)}\}_{(n_1, ... n_d)\in\Z^d} \\ & =   \bigcup_{j_1,...,j_d=0}^{T-1}  \{e^{ 2\pi i  ((m_1 +\frac{j_1}{T})   x_1+...+(m_d +\frac{j_d}{T})  x_d)}\}_{(m_1, ... m_d)\in\Z^d}\\ &\supset \bigcup_{j =0}^{T-1}   \{e^{ 2\pi i  ((m_1 +\frac{j }{T})   x_1+...+(m_d +\frac{j }{T})  x_d)}\}_{(m_1, ... m_d)\in\Z^d} \\  &=\s(\mbox{$\vec{\frac 1T}$} ).
   \end{align*}
  Let $  \bar T =\sup_{1\leq p\ne q\leq N} ||\vec M_p-\vec M_q||_\infty $ be the smallest positive integer 
 for which $Q\subset [0, \bar T)^d$. If \eqref{e2-cond=delta}  in  Theorem \ref{T1-special-delta} is   satisfied,  then $\s ( \mbox{$ \frac{\vec 1}{ \ov T} $})$
  is an exponential basis of $L^2(Q)$.   For that we need    $  \l \vec M_p-\vec M_q,\,   \vec 1\r \not \in \Z  \bar T$   for every $1\leq p\ne q\leq N$.  Otherwise, we can let  $L>{\bar T}$  be the smallest positive integer  for which  $\frac 1{L}\l \vec M_p-\vec M_q,\,   \vec 1\r \not \in \Z$ and conclude that $\s(\mbox{$\vec{\frac 1L }$})$   is an exponential basis of  $L^2(Q)$  extracted from the exponential frame $E((\mbox{$ {\frac 1L }$}\Z)^d)$.  We have proved the following 
 
\begin{Cor}\label{Cor-extracted1} Let $Q$   be defined as  above. 
   We can find  an integer $L>0$ for which   $Q\subset  [-\frac 12 ,\ L-\frac 12)^d $  and 
   $\s(\mbox{$\vec{\frac 1L}$} ) $ is  a Riesz basis of $L^2(Q) $.  
   \end{Cor}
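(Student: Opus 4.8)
The plan is to reduce the statement to a divisibility condition on integers and then simply take $L$ large. First I would apply Theorem~\ref{T1-special-delta} with $\vec\delta=\vec{\tfrac1L}=(\tfrac1L,\dots,\tfrac1L)$: the set $\s(\vec{\tfrac1L})$ is a Riesz basis of $L^2(Q)$ if and only if $\langle\vec M_p-\vec M_q,\vec{\tfrac1L}\rangle\notin\Z$ for all $p\ne q$, which is exactly condition \eqref{e2-cond=delta}. Writing $n_{p,q}=\langle\vec M_p-\vec M_q,\vec1\rangle=\sum_{i=1}^d(M_{p,i}-M_{q,i})$, which is an integer because every $\vec M_p\in\Z^d$, I would note $\langle\vec M_p-\vec M_q,\vec{\tfrac1L}\rangle=\tfrac1L n_{p,q}$, so the requirement collapses to the purely arithmetic condition that $L$ does not divide $n_{p,q}$ for any $p\ne q$.

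Next I would exhibit a single $L$ meeting both this arithmetic condition and the geometric containment $Q\subset[-\tfrac12,L-\tfrac12)^d$. Since there are finitely many pairs, $\{n_{p,q}\}_{p\ne q}$ is a finite set of integers; I would set $K=\max_{p\ne q}|n_{p,q}|$ and let $L$ be any integer with $L>\max(\bar T,K)$, where $\bar T$ is the side length forcing the containment. The choice $L>\bar T$ guarantees, after the translation placing the smallest corner of $Q$ at the origin, that $Q\subset[-\tfrac12,L-\tfrac12)^d$; consequently $E((\tfrac1L\Z)^d)$ is an orthogonal basis of $L^2([0,L)^d)$, hence an exponential frame on $L^2(Q)$, and $\s(\vec{\tfrac1L})\subset E((\tfrac1L\Z)^d)$ as computed just before the statement, so $\s(\vec{\tfrac1L})$ is genuinely extracted from that frame. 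The choice $L>K$ delivers the non-divisibility: for a nonzero integer $n_{p,q}$ with $|n_{p,q}|<L$, the smallest nonzero multiple of $L$ already has absolute value $L$, so $L\nmid n_{p,q}$ and thus $\tfrac1L n_{p,q}\notin\Z$. Combined with Theorem~\ref{T1-special-delta}, this yields that $\s(\vec{\tfrac1L})$ is a Riesz basis of $L^2(Q)$.

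The step I expect to be the real obstacle is the hidden nondegeneracy needed for such an $L$ to exist, namely that $n_{p,q}\ne0$ for every $p\ne q$. If two distinct translation vectors had equal coordinate sums, then $\tfrac1L n_{p,q}=0\in\Z$ for every $L$, and the sharp ``if and only if'' of Theorem~\ref{T1-special-delta} would forbid the diagonal choice $\vec\delta=\vec{\tfrac1L}$ from ever producing a basis, so no $L$ could work. I would therefore make explicit (or verify from the standing hypotheses on $Q$) that the coordinate sums $\langle\vec M_p,\vec1\rangle$ are pairwise distinct; granting this, the ``choose $L$ large'' argument above is complete, and the smallest admissible $L$ is precisely the one singled out in the discussion preceding the statement.
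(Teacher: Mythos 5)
Your argument follows the paper's own route exactly: apply Theorem~\ref{T1-special-delta} with the diagonal vector $\vec\delta=\vec{\frac 1L}$, reduce condition \eqref{e2-cond=delta} to the arithmetic statement that $L$ does not divide $n_{p,q}=\l \vec M_p-\vec M_q,\,\vec 1\r\in\Z$, and then take $L$ larger than both $\bar T$ and $\max_{p\ne q}|n_{p,q}|$. If anything, your choice of $L$ is cleaner than the paper's (``the smallest positive integer $L>\bar T$ for which $\frac 1L\l \vec M_p-\vec M_q,\,\vec 1\r\notin\Z$''), because it makes the existence of an admissible $L$ transparent once all $n_{p,q}\ne 0$.

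The obstacle you flag at the end is not a defect of your write-up; it is a genuine gap in the corollary and in the paper's own proof. The paper tacitly assumes that some $L$ with $\frac 1L n_{p,q}\notin\Z$ for all $p\ne q$ exists, which is exactly the requirement $n_{p,q}\ne 0$; but distinct vectors of $\Z^d$ can have equal coordinate sums as soon as $d\ge 2$. For instance, take $d=2$, $N=2$, $\vec M_1=(1,0)$, $\vec M_2=(0,1)$: then $Q\subset[-\frac 12,\,L-\frac 12)^2$ for every $L\ge 2$, yet $\l \vec M_1-\vec M_2,\,\vec 1\r=0\in\Z$, so by the ``only if'' direction of Theorem~\ref{T1-special-delta} the set $\s(\vec{\frac 1L})$ is not a Riesz basis of $L^2(Q)$ for \emph{any} integer $L$. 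Hence the corollary as stated fails in dimension $d\ge 2$, and your proposed fix --- adding the hypothesis that the coordinate sums $\l \vec M_p,\,\vec 1\r$, $1\le p\le N$, are pairwise distinct (which is automatic when $d=1$, since then $n_{p,q}=M_p-M_q\ne 0$) --- is precisely what is needed; with that hypothesis your proof is complete. Alternatively, one can salvage an extraction statement for arbitrary $Q$ by replacing the diagonal direction $\vec 1$ with any $\vec v\in\Z^d$ satisfying $\l \vec M_p-\vec M_q,\,\vec v\r\ne 0$ for all $p\ne q$ (such $\vec v$ exists because only finitely many hyperplanes must be avoided); the set $\s(\vec v/L)$ is still contained in the frame $E((\frac 1L\Z)^d)$, so the ``basis extracted from a frame'' interpretation survives, but the specific set $\s(\vec{\frac 1L})$ named in the statement does not.
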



From   Lemma \ref{lemma3}  and Theorem \ref{Cor-equiv-bases-frames} we have the following.

 \medskip
 \begin{Cor}\label{Cor-complement} Under the assumptions of  Corollary \ref{Cor-extracted1},   
 the set $\s(\mbox{$\vec{\frac 1L}$})$ is a Riesz basis of $L^2(Q)$ if and only if 
 $E((\mbox{$ {\frac 1L}$}\Z)^d )-\s(\mbox{$\vec{\frac 1L}$}) $ is a Riesz basis  on $L^2([-\frac 12 ,\ L-\frac 12)^d-Q)$.
 \end{Cor}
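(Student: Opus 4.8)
The plan is to deduce the equivalence from the hinted combination of Lemma \ref{lemma3} and Theorem \ref{Cor-equiv-bases-frames}, reading the statement as a chain of equivalences that converts a \emph{basis} into a \emph{frame}, then a frame into a complementary \emph{Riesz sequence}, and finally a Riesz sequence back into a \emph{basis}.

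First I would record the bookkeeping. Write $C_L=[-\frac12,L-\frac12)^d$, so that $E((\tfrac1L\Z)^d)$ is an orthogonal basis of $L^2(C_L)$. Decomposing each frequency $\tfrac1L\vec k$ as $\vec k=L\vec n+\vec r$ with $\vec n\in\Z^d$ and $\vec r\in\{0,\dots,L-1\}^d$, one gets
\[
E((\tfrac1L\Z)^d)=\bigcup_{\vec r\in\{0,\dots,L-1\}^d}\{e^{2\pi i\l\vec n+\tfrac{\vec r}{L},\,x\r}\}_{\vec n\in\Z^d},
\]
a disjoint union of $L^d$ integer-translate sheets of the form \eqref{d2-defofB}. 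The sheets of $\s(\vec{\tfrac1L})$ are exactly those with $\vec r=(j-1)\vec 1$, $j=1,\dots,N$; these are $N$ \emph{distinct} sheets, since $\s(\vec{\tfrac1L})$ being a Riesz basis forces $N\le L$ through Theorem \ref{T1-special-delta}. Because $C_L$ is the disjoint union of $L^d$ unit cubes and $Q$ is $N$ of them, the complementary domain $C_L\setminus Q$ is a multi-rectangle of the form \eqref{defQ} built from the remaining $L^d-N$ unit cubes, while $E((\tfrac1L\Z)^d)\setminus\s(\vec{\tfrac1L})$ is precisely a set of the form \eqref{d2-defofB} with $L^d-N$ sheets on that domain. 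Hence Theorem \ref{Cor-equiv-bases-frames} applies both to $\s(\vec{\tfrac1L})$ on $Q$ and to the complementary set on $C_L\setminus Q$.

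The chain then runs as follows. By Theorem \ref{Cor-equiv-bases-frames}, $\s(\vec{\tfrac1L})$ is a Riesz basis of $L^2(Q)$ if and only if it is a frame of $L^2(Q)$. To apply Lemma \ref{lemma3} I would dilate by the factor $\tfrac1L$: the map $x\mapsto x/L$ is, up to a constant, an isomorphism of $L^2(C_L)$ onto $L^2$ of a unit cube, sending $E((\tfrac1L\Z)^d)$ to $E(\Z^d)$, $\s(\vec{\tfrac1L})$ to $E(\Lambda)$ with $\Lambda=\{L\vec n+(j-1)\vec 1:\vec n\in\Z^d,\ 1\le j\le N\}\subset\Z^d$, and $Q$ to a subdomain which, after a harmless translation (multiplying each exponential by a unimodular constant, hence preserving frame, Riesz-sequence and basis properties), may be taken to be $D\subset Q_0$. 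Lemma \ref{lemma3} now gives that $E(\Lambda)$ is a frame on $D$ if and only if $E(\Z^d\setminus\Lambda)$ is a Riesz sequence on $Q_0\setminus D$; undoing the dilation, this reads that $\s(\vec{\tfrac1L})$ is a frame of $L^2(Q)$ if and only if $E((\tfrac1L\Z)^d)\setminus\s(\vec{\tfrac1L})$ is a Riesz sequence of $L^2(C_L\setminus Q)$. Finally, by the bookkeeping above the complementary set is admissible for Theorem \ref{Cor-equiv-bases-frames} on the multi-rectangle $C_L\setminus Q$, which upgrades ``Riesz sequence'' to ``Riesz basis''. Concatenating the three equivalences yields the statement.

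The step I expect to require the most care is exactly the identification in the second paragraph: one must check that the complementary exponential set is genuinely of the admissible form \eqref{d2-defofB}, i.e. that its number of sheets $L^d-N$ equals the number of unit cubes of $C_L\setminus Q$. It is this matching that makes Theorem \ref{Cor-equiv-bases-frames} applicable to the complement and thereby permits the final upgrade from Riesz sequence to Riesz basis; without it the chain would stall at ``Riesz sequence''. The remaining points—invariance of the three properties under the dilation $x\mapsto x/L$ and under translation of the domain—are routine and already implicit in the earlier scaling arguments (cf. Corollary \ref{Cor-rational-cubes}).
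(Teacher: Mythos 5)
Your proof is correct and is essentially the paper's own argument: the paper proves this corollary simply by citing Lemma \ref{lemma3} together with Theorem \ref{Cor-equiv-bases-frames}, which are exactly the two ingredients you chain together. The details you supply --- the dilation $x\mapsto x/L$ and translation reducing to the unit-cube setting of Lemma \ref{lemma3}, and the matching of the $L^d-N$ complementary sheets with the $L^d-N$ complementary unit cubes so that Theorem \ref{Cor-equiv-bases-frames} can be applied on the complement --- are precisely what the paper leaves implicit.
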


    
  %

 \medskip

 \section {Estimating the frame constants}\label{S-est-eigen}
 
 Let  $Q=Q(\vec M_1,\, ...,\, \vec M_N)$ be as in \eqref{defQ}, and let  $\B=\B(\delta_1,\, ...,\, \delta_N)$ as in \eqref{d2-defofB} be a a Riesz basis on $L^2(Q)$ with optimal frame constants  $0<\lambda\leq \Lambda$. By Theorem   \ref{T1-N-interv} and   \eqref{def-alpha} and \eqref{def-beta},   $\Lambda$ and $\lambda$ are the maximum and minimum eigenvalues of the matrices 
  ${\bf A}= \{\alpha_{i,j}\}_{1\leq i, j\leq N}$ and ${\bf B}= \{\beta_{p,q}\}_{1\leq p,q\leq N}$, with 
  \begin{equation}\label{def-alphabeta} \alpha_{i,j}= \sum_{p=1}^{N}e^{2\pi i \l\vec \delta_i-\vec \delta_j,\ \vec M_p \r},     \qquad
  \beta_{p,q}=  \sum_{j=1}^{N}e^{2\pi i \l\vec M_p-\vec M_q ,\ \vec \delta_j \r}.    \end{equation}
 %
When $\B=\s(\vec \delta)$ is as in \eqref{d2-def-S},   the frame constant of $\B$  are the maximum and minimum eigenvalues of  the matrix  ${\bf\tilde B}=\{\tilde \beta_{p,q}\}_{1\leq p,q\leq N}$ defined in \eqref{def-betatilde-Thm13}.

Gershgorin theorem   provides a powerful tool for estimating the eigenvalues of  complex-valued matrices.   
%
It states that each eigenvalue of a square matrix  ${  M}=\{m_{i,j}\}_{1\leq i,j\leq n}$ is in at least one of the disks
$ D_j=\{z\in\C \, :\, |z-m_{j,j}|\leq R_j\}, 
$ 
and in at least one of the disks 
$ D'_j=\{z\in\C \, :\, |z-m_{j,j}|\leq C_j\}, 
$ 
where  $R_j$ (resp. $C_j$) are the sum of the off-diagonal elements of the $j-$th row (column) of ${\bf M}$, i.e.
\begin{equation}\label{def-RjCj}
R_j=\sum_{i =1\atop{i\ne j}}^n |m_{j, i}|,   \quad C_j=\sum_{i =1\atop{i\ne j}}^n |m_{i,j}| .
\end{equation}
See \cite{G}, and also   \cite[pg. 146]{MM} and \cite{BM}.
 
Observe that if    $|m_{j,j}|> \max\{R_j, C_j\} $ for every $j$, (i.e., if $M$ is {\it diagonally dominant}),   then $M$ is nonsingular. 
  
 The following refinement of Gershgorin theorem  is  in  \cite{AB}.
  \begin{Thm}\label{T-gen-gers}
  Let  ${  M}$ be an Hermitian matrix with eigenvalues $\lambda_1$,...,\,  $ \lambda_n$. Let $R_j=C_j$ be as in \eqref{def-RjCj}. 
 We have
 \begin{equation}\label{eigenv1}
 \max_{1\leq j\leq n}\{m_{j,j}-R_j\}\leq   \max_{1\leq j\leq n}\lambda_{j}\leq \max_{1\leq j\leq n}\{m_{j,j}+R_j\}$$$$ \quad \min_{1\leq j\leq n}\{m_{j,j}-R_j\}\leq \min_{1\leq j\leq n}\lambda_j  \leq \min_{1\leq j\leq n}\{m_{j,j}+R_j\}.
 \end{equation}
\end{Thm}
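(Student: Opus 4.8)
The plan is to derive all four inequalities in \eqref{eigenv1} from just two ingredients: the classical Gershgorin theorem (recalled above) for the two \emph{outer} bounds, and the Rayleigh quotient characterization of the extreme eigenvalues for the two \emph{inner} bounds. Since $M$ is Hermitian its eigenvalues $\lambda_1,\dots,\lambda_n$ are real, and the hypothesis $R_j=C_j$ is automatic because $m_{i,j}=\overline{m_{j,i}}$ forces $|m_{i,j}|=|m_{j,i}|$, so a single symbol $R_j$ suffices. The workhorse will be the min--max identities
$$\max_{1\le j\le n}\lambda_j=\max_{\|x\|=1}\langle Mx,x\rangle,\qquad \min_{1\le j\le n}\lambda_j=\min_{\|x\|=1}\langle Mx,x\rangle.$$

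For the outer bounds I would invoke Gershgorin verbatim. Every eigenvalue lies in some disk $D_j=\{z:|z-m_{j,j}|\le R_j\}$, and since the eigenvalues are real this means $m_{j,j}-R_j\le\lambda\le m_{j,j}+R_j$ for the relevant $j$. Applying this to $\max_j\lambda_j$ gives $\max_j\lambda_j\le\max_j\{m_{j,j}+R_j\}$, and applying it to $\min_j\lambda_j$ gives $\min_j\lambda_j\ge\min_j\{m_{j,j}-R_j\}$; these are exactly the two extreme inequalities of \eqref{eigenv1}.

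For the inner bounds I would test the Rayleigh quotient against the standard basis vectors $\vec e_j$. Because $\langle M\vec e_j,\vec e_j\rangle=m_{j,j}$ and $\|\vec e_j\|=1$, the min--max identities immediately trap every diagonal entry between the extreme eigenvalues:
$$\min_{1\le k\le n}\lambda_k\le m_{j,j}\le\max_{1\le k\le n}\lambda_k\qquad\text{for every }j.$$
Taking the maximum over $j$ in the left inequality, together with $m_{j,j}-R_j\le m_{j,j}$, yields $\max_j\{m_{j,j}-R_j\}\le\max_k\lambda_k$; taking the minimum over $j$ in the right inequality, together with $m_{j,j}\le m_{j,j}+R_j$, yields $\min_k\lambda_k\le\min_j\{m_{j,j}+R_j\}$. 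These are precisely the two inner inequalities, and combining them with the previous paragraph completes \eqref{eigenv1}.

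I do not expect a serious obstacle. The only genuine idea is the recognition that the refinement over plain Gershgorin comes entirely from the fact that, for a Hermitian matrix, each diagonal entry $m_{j,j}$ is itself a Rayleigh quotient and hence lies in the spectral interval $[\min_k\lambda_k,\max_k\lambda_k]$. The one point deserving a word of care is that both halves of the argument use the reality of the spectrum, which is exactly what the Hermitian hypothesis supplies; the same hypothesis is what collapses $R_j$ and $C_j$ into a single quantity, so no separate treatment of rows and columns is needed.
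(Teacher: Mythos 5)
Your proof is correct, but there is nothing in the paper to compare it against: the paper does not prove Theorem \ref{T-gen-gers}, it simply quotes the result from Anderson and Best \cite{AB} (``The following refinement of Gershgorin theorem is in \cite{AB}''). Your argument is a valid, self-contained substitute. The two outer inequalities are exactly classical Gershgorin applied to the real numbers $\max_j\lambda_j$ and $\min_j\lambda_j$ (each lies in some disk $D_{j_0}$, then relax $m_{j_0,j_0}\pm R_{j_0}$ to the max or min over all $j$), and the two inner inequalities follow from the observation that for a Hermitian matrix each diagonal entry $m_{j,j}=\langle M\vec e_j,\vec e_j\rangle$ is a Rayleigh quotient, hence $\min_k\lambda_k\leq m_{j,j}\leq\max_k\lambda_k$ for every $j$; since $R_j\geq 0$, this yields $\max_j\{m_{j,j}-R_j\}\leq\max_j m_{j,j}\leq\max_k\lambda_k$ and $\min_k\lambda_k\leq\min_j m_{j,j}\leq\min_j\{m_{j,j}+R_j\}$, which is in fact slightly stronger than the stated inner bounds. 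This is precisely the ``Gerschgorin--Rayleigh'' mechanism advertised in the title of \cite{AB}, so your route is presumably close to the original; what it buys here is making the paper self-contained on this point at the cost of a few lines, whereas the citation buys brevity. All the hypotheses you invoke --- reality of the spectrum, $|m_{i,j}|=|m_{j,i}|$ so that $R_j=C_j$, and the min--max characterization of the extreme eigenvalues --- are supplied by Hermitian symmetry, so there is no gap.
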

   %
We can use Theorem \ref{T-gen-gers}  to estimate the optimal  frame constants  of $\B$ and $S(\delta)$.

\begin{Thm}\label{T6-est-eigen} Let $\B$   be a Riesz basis of $L^2(Q)$. 
Let \begin{equation}\label{def-ri} r_i=    \sum_{1\leq j\leq N \atop{j\ne i}}\!\!\left(  1 -\frac 4{N^2}\sum_{p,q=1\atop {p<q}}^N \sin^2( \pi   \l  \vec \delta_i-\vec \delta_j,
 \ \vec M_p-\vec M_q\r) \right)^{\frac 12},
 $$$$
\  \rho_p= \sum_{1\leq q\leq N \atop{q\ne p}}\!\!\left( 1-\frac 4{N^2}\sum_{i,j=1\atop{i<j}}^N \sin^2( \pi   \l  \vec \delta_i-\vec \delta_j,
 \ \vec M_p-\vec M_q\r)\right)^{\frac 12}\!\!\!.
 \end{equation} 
The optimal frame constants of $\B$ satisfy
\begin{equation}\label{e-ineq-gersch}
 N(1-    \min_{1\leq i\leq N\atop{1\leq p\leq N}}\{r_i,\    \rho_p\}\leq\lambda \leq \Lambda \leq  N(1+   \max_{1\leq i\leq N\atop{1\leq p\leq N}} \{r_i,\    \rho_p\}) 
\end{equation}
b) If $\B=\s(\vec \delta)$, we have
\begin{equation}\label{e-2nd-ineq.Ger}
 N(1-  \min_{1\leq p\leq N} s_p)\leq \lambda\leq \Lambda \leq N(1+ \max_{1\leq p\leq N} s_p)
\end{equation}
where $\dsize s_p=\sum_{1\leq q \leq N\atop{q\ne p}} \left|\frac{\sin(\pi N\l \vec \delta, \vec M_q-\vec M_p\r)} {N\sin(\pi  \l \vec \delta, \vec M_q-\vec M_p\r)}\right|$.
 \end{Thm}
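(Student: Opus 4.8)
The plan is to read Theorem \ref{T6-est-eigen} as a Gershgorin estimate applied to three Hermitian matrices whose diagonal entries are all equal to $N$. By Theorem \ref{T1-N-interv} the optimal constants $\lambda,\Lambda$ are the smallest and largest eigenvalues of the matrices ${\bf A}=\{\alpha_{i,j}\}$ and ${\bf B}=\{\beta_{p,q}\}$ of \eqref{def-alphabeta}, and by Lemma \ref{L2-eigen-TT*} these two matrices share the same spectrum, so I may bound the eigenvalues using whichever matrix gives the sharper estimate. Setting $i=j$ (resp. $p=q$) in \eqref{def-alphabeta} collapses the defining sum to $N$ ones, so every diagonal entry equals $N$; this is precisely what makes the Gershgorin radii measure the deviation of the eigenvalues from $N$. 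Since ${\bf A}={\bf\Gamma}{\bf\Gamma}^*$ and ${\bf B}={\bf\Gamma}^*{\bf\Gamma}$ are Hermitian, the symmetric ($R_j=C_j$) form of Theorem \ref{T-gen-gers} applies to each.

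The one genuine computation is the modulus of an off-diagonal entry. Writing $\alpha_{i,j}=\sum_{p=1}^{N}e^{i\phi_p}$ with $\phi_p=2\pi\l\vec\delta_i-\vec\delta_j,\,\vec M_p\r$, I would expand $|\alpha_{i,j}|^2=N+2\sum_{p<q}\cos(\phi_p-\phi_q)$ and then use $\cos\theta=1-2\sin^2(\theta/2)$ together with $2\binom N2=N(N-1)$ to obtain
\[
|\alpha_{i,j}|^2=N^2-4\sum_{p<q}\sin^2(\pi\l\vec\delta_i-\vec\delta_j,\ \vec M_p-\vec M_q\r).
\]
Taking the square root and factoring out $N$ shows that $|\alpha_{i,j}|$ is exactly $N$ times the summand appearing in the definition \eqref{def-ri} of $r_i$, so the off-diagonal row sum of ${\bf A}$ in row $i$ equals $N r_i$. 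Interchanging the roles of the $\vec\delta$'s and the $\vec M$'s gives the identical identity for $\beta_{p,q}$, and hence off-diagonal row sums $N\rho_p$ for ${\bf B}$.

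With diagonal $N$ and off-diagonal row sums $N r_i$, Theorem \ref{T-gen-gers} confines every eigenvalue of ${\bf A}$ to the interval $[\,N-N\max_i r_i,\ N+N\max_i r_i\,]$, giving $\lambda\ge N(1-\max_i r_i)$ and $\Lambda\le N(1+\max_i r_i)$; applying the same theorem to ${\bf B}$ gives the analogous bounds in terms of $\rho_p$. Because ${\bf A}$ and ${\bf B}$ have the same eigenvalues, I would retain the sharper radius at each end, which produces the two-sided estimate \eqref{e-ineq-gersch}. Part (b) is the single-matrix version of this argument: the matrix ${\bf\tilde B}$ of \eqref{def-betatilde-Thm13} is real and symmetric, since each $\tilde\beta_{p,q}$ is a ratio of sines and the oddness of the sine forces $\tilde\beta_{p,q}=\tilde\beta_{q,p}$; its diagonal is $N$ and its off-diagonal row sum is $\sum_{q\ne p}|\tilde\beta_{p,q}|=N s_p$, so Theorem \ref{T-gen-gers} yields \eqref{e-2nd-ineq.Ger} directly.

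I expect no serious obstacle here: the whole content is the trigonometric identity reducing $|\alpha_{i,j}|$ to a sum of $\sin^2$ terms, after which everything is bookkeeping. The only points that require care are verifying that all three matrices are genuinely Hermitian, so that the row-equals-column form of Gershgorin in Theorem \ref{T-gen-gers} is legitimate, and the precise manner in which the two estimates coming from ${\bf A}$ and ${\bf B}$ are combined into a single two-sided bound.
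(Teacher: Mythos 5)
Your proposal is correct and is essentially the paper's own proof: both identify $\lambda,\Lambda$ with the extreme eigenvalues of ${\bf A}$ and ${\bf B}$ (which have equal spectra by Lemma \ref{L2-eigen-TT*}), compute the off-diagonal row sums $Nr_i$, $N\rho_p$ and $Ns_p$ from the identity $\bigl|\sum_{p=1}^N e^{i\phi_p}\bigr|^2=N^2-4\sum_{p<q}\sin^2\bigl((\phi_p-\phi_q)/2\bigr)$, and then invoke Theorem \ref{T-gen-gers}. If anything you are more careful than the paper at the last step: what Gershgorin actually delivers are bounds through $\max_i r_i$ and $\max_p \rho_p$, combined by keeping the smaller of the two radii (your ``sharper radius at each end''), which is also all the paper's proof delivers, so the literal expression $\min_{i,p}\{r_i,\ \rho_p\}$ in \eqref{e-ineq-gersch} should be read as loose notation in the statement rather than as a defect of your argument.
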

 
 \begin{proof} a) The optimal frame constants of $\B$ are the maximum and minimum eigenvalues of the matrices $\A$ and ${\bf B}$. In view of  \eqref{def-alphabeta}, it is easy to verify that   
\begin{align*}\sum_{1\leq j\leq N \atop{j\ne i}}|\alpha_{i, j}|  &= \sum_{1\leq j\leq N \atop{j\ne i}}\left(N +2  \sum_{p,q=1\atop {p<q}}^N \cos(2\pi   \l  \vec \delta_i-\vec \delta_j,
 \ \vec M_p-\vec M_q\r)\right)^{\frac 12}\\ & =
\sum_{1\leq j\leq N \atop{j\ne i}}\left(N    + 2\sum_{p,q=1\atop {p<q}}^N(1-2 \sin^2( \pi   \l  \vec \delta_i-\vec \delta_j,
 \ \vec M_p-\vec M_q\r) \right) ^{\frac 12}
 \\ & =
\sum_{1\leq j\leq N \atop{j\ne i}}\left(N^2   - 4\sum_{p,q=1\atop {p<q}}^N \sin^2( \pi   \l  \vec \delta_i-\vec \delta_j,
 \ \vec M_p-\vec M_q\r) \right)^{\frac 12}
\\ & = 
N   r_i.
 \end{align*}
We can prove that  $\sum_{1\leq q\leq N \atop{q\ne p}}|\beta_{p,q}| =N  \rho_p $  in a similar manner.

 Since $\A$ and ${\bf B}$ have the same eigenvalues, we can apply Theorem \ref{T-gen-gers} with $m_{j,j}=N$ and  $R_i=Nr_i$ or  $R_i=N\rho_i$, and \eqref{e-ineq-gersch} follows.
 
 b) When $\B=\s(\vec \delta)$ we can apply Theorem \ref{T-gen-gers} to the matrix ${\bf \tilde B}$; the inequality 
  \eqref{e-2nd-ineq.Ger} follows from   \eqref{def-betatilde-Thm13}.
 \end{proof}

\begin{Cor}  a) Let $\B$, $\lambda$ and $\Lambda$  be as in  in Theorem \ref{T6-est-eigen}.   
%
If \begin{equation}\label{e-cond-sin}    \min_{1\leq p,q\leq N\atop{p<q}} \sin^2( \pi   \l  \vec \delta_i-\vec \delta_j,
 \ \vec M_p-\vec M_q\r)   \ge \frac{N}{2(N-1)}\left(1-\left(\frac{1-a}{N -1}\right)^2\right)\end{equation}  for some $0<a<1$, we have 
 $$a N\leq \lambda \leq \Lambda \leq (2-a) N.
 $$
 
 b) Let $\B=\s(\vec \delta)$, with  $  \min_{1\leq p,q\leq N\atop{p<q}}|\sin(\pi  \l \vec \delta, \vec M_q-\vec M_p\r)|\ge \frac{ N-1}{N (1-a)}$. Then,
 $$
Na \leq \lambda\leq \Lambda \leq N\left(2+a\right).
 $$
 \end{Cor}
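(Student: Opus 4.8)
The plan is to obtain both statements as immediate specializations of the Gershgorin-type bounds of Theorem \ref{T6-est-eigen}, which have already reduced the estimation of the frame constants to controlling the row/column radii $r_i,\rho_p$ of \eqref{def-ri} in part (a), and $s_p$ in part (b). In each case the hypothesis is exactly what is needed to force every relevant radius to be at most $1-a$; once that bound is in hand, substituting it into \eqref{e-ineq-gersch} and \eqref{e-2nd-ineq.Ger} respectively produces the claimed two-sided inequalities with no further analysis.

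For part (a) I would first isolate a single summand of $r_i$ (and symmetrically of $\rho_p$), namely $\left(1-\tfrac{4}{N^2}\Sigma\right)^{1/2}$ with $\Sigma=\sum_{p<q}\sin^2(\pi\langle\vec\delta_i-\vec\delta_j,\vec M_p-\vec M_q\rangle)$ a sum of $\binom{N}{2}=\tfrac{N(N-1)}{2}$ nonnegative terms. Using \eqref{e-cond-sin} to bound every one of these terms from below by the common constant $c=\tfrac{N}{2(N-1)}\big(1-(\tfrac{1-a}{N-1})^2\big)$ gives $\Sigma\ge \tfrac{N(N-1)}{2}\,c$, hence $\tfrac{4}{N^2}\Sigma\ge \tfrac{2(N-1)}{N}\,c=1-(\tfrac{1-a}{N-1})^2$. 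Therefore $1-\tfrac{4}{N^2}\Sigma\le (\tfrac{1-a}{N-1})^2$, so each summand is at most $\tfrac{1-a}{N-1}$; since $r_i$ is a sum of the $N-1$ such terms with $j\ne i$, this yields $r_i\le 1-a$, and the identical estimate gives $\rho_p\le 1-a$. Feeding $\max_{i,p}\{r_i,\rho_p\}\le 1-a$ into \eqref{e-ineq-gersch} then produces $\lambda\ge N(1-(1-a))=aN$ and $\Lambda\le N(1+(1-a))=(2-a)N$.

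For part (b) the mechanism is the same but lighter. Each term of $s_p$ has the form $\big|\tfrac{\sin(\pi N\langle\vec\delta,\vec M_q-\vec M_p\rangle)}{N\sin(\pi\langle\vec\delta,\vec M_q-\vec M_p\rangle)}\big|$; bounding the numerator trivially by $1$ and the denominator from below by the hypothesis $|\sin(\pi\langle\vec\delta,\vec M_q-\vec M_p\rangle)|\ge \tfrac{N-1}{N(1-a)}$ shows each term is at most $\tfrac{1}{N}\cdot\tfrac{N(1-a)}{N-1}=\tfrac{1-a}{N-1}$. Summing the $N-1$ terms yields $s_p\le 1-a$ for every $p$, and substituting into \eqref{e-2nd-ineq.Ger} gives the stated bounds.

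I expect the only delicate point to be purely algebraic rather than analytic: the constant in \eqref{e-cond-sin} has been engineered precisely so that the cancellation of $\tfrac{2(N-1)}{N}$ against the factor $\tfrac{N}{2(N-1)}$ inside $c$ collapses the chain of inequalities to exactly $\tfrac{1-a}{N-1}$ per summand, and the main care is in verifying that cancellation together with the square-root step. There is no genuine obstacle here, since all the work resides in Theorem \ref{T6-est-eigen}. I would, however, double-check the sign conventions at the endpoints: the computation in part (b) naturally produces the upper bound $N(2-a)$, in parallel with part (a), so I would verify whether the stated $N(2+a)$ is intended or a typographical slip before finalizing.
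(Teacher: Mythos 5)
Your proof is correct and follows essentially the same route as the paper's: both arguments bound each summand of $r_i$ (and $\rho_p$, resp.\ $s_p$) from above using the hypothesis on the minimal sine, obtain $r_i,\,\rho_p\leq 1-a$ (resp.\ $s_p\leq 1-a$), and substitute into the Gershgorin-type bounds \eqref{e-ineq-gersch} and \eqref{e-2nd-ineq.Ger} of Theorem \ref{T6-est-eigen}. Your closing suspicion is also justified: the paper's own computation in part (b) yields the upper bound $N(2-a)$, so the stated $N(2+a)$ is evidently a typographical slip (a weaker but still valid bound).
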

 \begin{proof}
 a)  Let $\dsize s=\min_{1\leq p,q\leq N\atop{p<q}} |\sin ( \pi   \l  \vec \delta_i-\vec \delta_j,
 \ \vec M_p-\vec M_q\r)| $.  By \eqref{def-ri} and \eqref{e-cond-sin},   
  $ 
  r_j\leq   
(N-1)\left(1   - \frac { 2( N-1) } {N }  s^2 \right)^{\frac 12}\leq 1-a.
 $ A similar  inequality holds for    $\rho_p$.
 By Theorem \ref{T6-est-eigen}, part a) of the corollary is proved.
 
 \medskip
 For part b), we let  $\dsize s= \min_{1\leq p,q\leq N\atop{p<q}} |\sin(\pi  \l \vec \delta, \vec M_q-\vec M_p\r)| $. With the notation of   Theorem \ref{T6-est-eigen}  b),
 $$
  s_p\leq \sum_{1\leq q \leq N\atop{q\ne p}} \frac 1 {N|\sin(\pi  \l \vec \delta, \vec M_q-\vec M_p\r)|}\leq  \frac{N-1}{N s}. 
 $$ 
By \eqref{e-2nd-ineq.Ger}, the proof of the corollary is concluded
 \end{proof}

\end{document}